\setlist[enumerate]{nosep}
\definecolor{labelkey}{rgb}{0,0.08,0.45}
\definecolor{refkey}{rgb}{0,0.6,0.0}
\definecolor{Brown}{rgb}{0.45,0.0,0.05}
\definecolor{lime}{rgb}{0.00,0.8,0.0}
\definecolor{lblue}{rgb}{0.5,0.5,0.99}
\newcommand{\seppone}{\setlength{\itemsep}{-1pt}}
\newcommand{\nnn}{\ensuremath{{n\in{\mathbb N}}}}
\newcommand{\thalb}{\ensuremath{\tfrac{1}{2}}}
\newcommand{\menge}[2]{\big\{{#1}~\big |~{#2}\big\}}
\newcommand{\fenv}[1]%
{\ensuremath{\,\overrightarrow{\operatorname{env}}_{#1}}}
\newcommand{\benv}[1]%
{\ensuremath{\,\overleftarrow{\operatorname{env}}_{#1}}}
\newcommand{\scal}[2]{\left\langle{#1},{#2}  \right\rangle}
\newcommand{\RR}{\ensuremath{\mathbb R}}
\newcommand{\RP}{\ensuremath{\mathbb{R}_+}}
\newcommand{\RPP}{\ensuremath{\mathbb{R}_{++}}}
\newcommand{\RM}{\ensuremath{\mathbb{R}_-}}
\newcommand{\NN}{\ensuremath{\mathbb N}}
\newcommand{\inte}{\ensuremath{\operatorname{int}}}
\newcommand{\ran}{\ensuremath{\operatorname{ran}}}
\newcommand{\Id}{\ensuremath{\operatorname{Id}}}
\newcommand{\pinf}{\ensuremath{+\infty}}
\crefname{lemma}{Lemma}{Lemmas}
\crefname{figure}{Figure}{Figures}
\crefname{equation}{}{equations}
\crefname{chapter}{Appendix}{chapters}
\crefname{item}{}{items}
\crefname{enumi}{}{}
\newtheorem{theorem}{Theorem}[section]
\newtheorem{lemma}[theorem]{Lemma}
\newtheorem{corollary}[theorem]{Corollary}
\newtheorem{example}[theorem]{Example}
\newtheorem{fact}[theorem]{Fact}
\newtheorem{remark}[theorem]{Remark}
\providecommand{\RR}{\mathbb{R}}
\providecommand{\ran}{\operatorname{ran}}
\providecommand{\Id}{\operatorname{{ Id}}}
\providecommand{\NN}{\mathbb{N}}
\providecommand{\ran}{\operatorname{ran}}
\providecommand{\Id}{\operatorname{Id}}
\providecommand{\RR}{\mathbb{R}}
\providecommand{\NN}{\mathbb{N}}
\newcommand{\kkk}[1]{\ensuremath{\textstyle\mathsmaller{({#1})}}}
\newcommand{\eins}{\ensuremath{\textstyle\mathsmaller{({1})}}}
\newcommand{\zwei}{\ensuremath{\textstyle\mathsmaller{({2})}}}
\definecolor{myblue}{rgb}{.8, .8, 1}
  \newcommand*\mybluebox[1]{%
    \colorbox{myblue}{\hspace{1em}#1\hspace{1em}}}
\begin{document}

\title{ \textsc
On Dykstra's algorithm: finite convergence, stalling, 
and the method of alternating projections
}

\author{
Heinz H.\ Bauschke\thanks{
Mathematics, University
of British Columbia,
Kelowna, B.C.\ V1V~1V7, Canada. E-mail:
\texttt{heinz.bauschke@ubc.ca}},~
Regina S.\ Burachik\thanks{
Schoot of IT \& Mathematical Sciences, 
University of South Australia, 
Mawson Lakes, SA, Australia. 
 E-mail: \texttt{regina.burachik@unisa.edu.au}
},~
Daniel B.\ Herman\thanks{E-mail: \texttt{DanielHerman@hotmail.ca}},
~and~
C.\ Yal\c{c}{\i}n Kaya\thanks{
School of Information Technology and Mathematical Sciences, 
University of South Australia, 
Mawson Lakes, Adelaide, SA, Australia.
 E-mail: \texttt{yalcin.kaya@unisa.edu.au}}
}

\date{January 18, 2020}

\maketitle

\begin{abstract}
A popular method for finding the projection onto the intersection 
of two closed convex subsets in Hilbert space is Dykstra's algorithm. 

In this paper, we provide sufficient conditions for Dykstra's algorithm 
to converge rapidly, in finitely many steps. 
We also analyze the behaviour of Dykstra's algorithm 
applied to a line and a square. 
This case study reveals stark similarities to the method of alternating 
projections. Moreover, we show that Dykstra's algorithm 
may stall for an arbitrarily long time. 
Finally, we present some open problems. 
\end{abstract}
{ 
\noindent
{\bfseries 2010 Mathematics Subject Classification:}
{Primary 
65K05, 
90C25;
Secondary 
47H09,
52A05.
}

\noindent {\bfseries Keywords:}
convex set, 
Dykstra's algorithm,
method of alternating projections,
projection.
}

\section{Introduction}

Suppose that 
\begin{empheq}[box=\mybluebox]{equation}
\text{$X$ is a Hilbert space, }
\end{empheq}
with inner product $\scal{\cdot}{\cdot}$ and induced norm
$\|\cdot\|$. 
Suppose that 
\begin{empheq}[box=\mybluebox]{equation}
\text{$A$ and $B$ are closed convex subsets of $X$ with $A\cap B\neq\varnothing$, 
and $z\in X$.}
\end{empheq}
Our goal is to find 
\begin{equation}
P_{A\cap B}(z),
\end{equation}
the point in $A\cap B$ nearest to $z$. 
Even when $A$ and $B$ are ``simple'' in the sense that 
$P_A$ and $P_B$ are easily computable, there is in general 
no simple formula for $P_{A\cap B}(z)$. 
Instead, one may employ \emph{Dykstra's algorithm} 
(see \cite{Boyle} and also \cite{Jat},\cite{Deutsch},\cite{BC2017}) 
to find this point. 
The algorithm proceeds as follows.
Set $b_0 := z$,
$p_0 := 0$, 
$q_0 := 0$,
and generate sequences iteratively via
\begin{subequations}
  \label{e:dyk}
\begin{align}
a_n &= P_A(b_{n-1}+p_{n-1}),& p_n &= b_{n-1}+p_{n-1}-a_n,\\
b_n &= P_B(a_n+q_{n-1}),&  q_n &= a_n+q_{n-1}-b_n,
\end{align}
\end{subequations}
where $n\geq 1$. 
The sequences $(a_n)$ and $(b_n)$ are the \emph{main sequences} while
$(p_n)$ and $(q_n)$ are the \emph{auxiliary sequences} of Dykstra's 
algorithm. 
The central convergence result concerning Dykstra's algorithm 
is the following.

\begin{fact}
  \label{f:Dyk}
{\rm\bf (Boyle--Dykstra)}
(See \cite{Boyle}.)
The main sequences $(a_n)$ and $(b_n)$ of Dykstra's algorithm 
both converge strongly to $P_{A\cap B}(z)$. 
\end{fact}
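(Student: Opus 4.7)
The plan is to extract from the variational characterization of each projection a telescoping inequality that simultaneously bounds the squared gaps between consecutive iterates and sets up the variational inequality characterizing $P_{A\cap B}(z)$.

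First I would exploit the consequences of $a_n = P_A(b_{n-1}+p_{n-1})$ and $b_n = P_B(a_n+q_{n-1})$: the auxiliary residuals $p_n$ and $q_n$ lie in the normal cones to $A$ at $a_n$ and to $B$ at $b_n$, respectively, giving
$$\langle p_n,a-a_n\rangle\leq 0 \text{ for all }a\in A,\qquad \langle q_n,b-b_n\rangle\leq 0 \text{ for all }b\in B,$$
together with the Pythagorean-type bounds
$$\|a_n-c\|^2+\|p_n\|^2\leq\|b_{n-1}+p_{n-1}-c\|^2,\qquad \|b_n-c\|^2+\|q_n\|^2\leq\|a_n+q_{n-1}-c\|^2,$$
valid for $c\in A$ and $c\in B$ respectively (these come from firm nonexpansiveness of the projections).

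Second, fixing $c\in A\cap B$ and expanding the right-hand sides as $\|b_{n-1}-c\|^2+2\langle p_{n-1},b_{n-1}-c\rangle+\|p_{n-1}\|^2$ and symmetrically, I would sum over $n=1,\ldots,N$ and cancel the cross terms $2\langle p_{n-1},b_{n-1}-c\rangle$ and $2\langle q_{n-1},a_n-c\rangle$ by using the update formulas to rewrite $b_{n-1}-c$ and $a_n-c$ in terms of later iterates. This is the heart of the classical Boyle--Dykstra bookkeeping, and yields a finite identity controlling $\|a_N-c\|^2$ together with $\sum_{n=1}^{N}\bigl(\|a_n-b_{n-1}\|^2+\|b_n-a_n\|^2\bigr)$ by $\|z-c\|^2$ up to vanishing boundary terms. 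Consequently $(a_n)$ and $(b_n)$ are bounded, $a_n-b_n\to 0$, and every weak cluster point $x$ of $(a_n)$ belongs to $A\cap B$ by weak closedness of the two convex sets.

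Third, passing to the limit along a weakly convergent subsequence in the telescoping identity yields
$$\langle z-x,c-x\rangle\leq 0\quad\text{for every } c\in A\cap B,$$
which is precisely the variational inequality characterizing $x=P_{A\cap B}(z)$. Uniqueness of the projection then forces full weak convergence of both $(a_n)$ and $(b_n)$ to $P_{A\cap B}(z)$, and specializing the telescoping identity to $c=P_{A\cap B}(z)$ gives $\|a_n-P_{A\cap B}(z)\|\to 0$, upgrading weak to strong convergence.

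The main obstacle will be the bookkeeping in the second step: the cross terms involving the auxiliary sequences must be reorganized so that contributions at consecutive indices cancel up to controllable residuals, leaving a clean telescoping identity. The iterates $a_n$ and $b_n$ are not directly Fej\'er monotone with respect to $A\cap B$ (which is why naive arguments fail); the telescoping works only because the auxiliary variables $(p_n),(q_n)$ supply exactly the correction that restores a monotone-like behaviour. Once this identity is in place, the weak-cluster analysis and the variational characterization of the projection complete the argument routinely.
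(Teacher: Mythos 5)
The paper does not actually prove this statement: it is imported as a known result with a citation to Boyle and Dykstra, so there is no in-paper argument to compare against. Your outline reproduces the classical route of the original proof (normal-cone memberships of $p_n$ and $q_n$, the firm-nonexpansiveness inequalities, a summed/telescoped energy identity, weak cluster points in $A\cap B$, the variational inequality characterizing $P_{A\cap B}(z)$, and finally the strong upgrade), and you rightly flag the bookkeeping as the crux.

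There is, however, a genuine gap precisely at that crux. You claim the summed identity holds ``up to vanishing boundary terms.'' It does not: the boundary terms involve $\langle p_N, \cdot\rangle$ and $\langle q_N,\cdot\rangle$, and the auxiliary sequences need not stay bounded --- this very paper exhibits $q_n$ growing linearly in $n$ during the stalling phase (\cref{l:3}) and $p_k=z+k-1$, $q_k=-k$ in the two-intervals remark. Moreover, a term such as $\langle q_{n-1},a_n-c\rangle$ has no definite sign, since $a_n\notin B$ in general; the cross terms that \emph{can} be controlled are of the form $\langle p_n,a_n-a_{n+1}\rangle\geq 0$, and the hardest lemma of Boyle--Dykstra consists in summing these and playing them off against the summability of $\sum\|a_n-b_n\|^2$ to extract a subsequence along which $\langle p_n,a_n-b_n\rangle\to 0$. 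Without that step neither the variational inequality nor the convergence follows. Relatedly, the final ``upgrade from weak to strong convergence'' is not routine --- it is the whole point of the theorem, and the reason Dykstra's algorithm is stronger than MAP (for which only weak convergence holds, cf.\ Hundal's example cited in the introduction). It comes from the identity $z-b_n=p_n+q_n$ and the resulting pointwise estimate
\begin{equation*}
\|b_n-x\|^2\;\leq\;\langle p_n,a_n-b_n\rangle+\langle b_n-x,\,z-x\rangle,
\qquad x:=P_{A\cap B}(z),
\end{equation*}
evaluated along the good subsequence, not from merely ``specializing the telescoping identity to $c=x$.'' Your skeleton is the correct one, but the two steps you treat as routine are the theorem.
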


A closely related algorithm is the 
\emph{Method of Alternating Projections (MAP)}, 
which can be thought of as a cousin of Dykstra's algorithm 
with $p_n\equiv q_n \equiv 0$:
We define $c_0 := z$ and proceed via
\begin{equation}
c_{2n-1} = P_A(c_{2n-2})
\;\;\text{and}\;\;
c_{2n} = P_B(c_{2n-1})
\end{equation}
for $n\geq 1$. 

\begin{fact}
{\rm\bf (Bregman)}
(See \cite{Bregman}.)
The MAP sequence 
$(c_n)$ 
converges weakly to some point in $A\cap B$. 
\end{fact}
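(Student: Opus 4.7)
The plan is to establish weak convergence by combining three ingredients: (a) Fejér monotonicity of $(c_n)$ with respect to the target set $A\cap B$, (b) the fact that consecutive differences $\|c_{2n}-c_{2n-1}\|$ tend to $0$, which forces every weak cluster point of $(c_n)$ to lie in $A\cap B$, and (c) Opial's lemma (or a direct argument with the parallelogram identity) to upgrade boundedly-many cluster points to a unique weak limit.

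First I would fix an arbitrary $x\in A\cap B$ and use the firm nonexpansiveness of the metric projection onto a closed convex set. Since $P_A$ is firmly nonexpansive and $x = P_A(x)$, one has
\begin{equation}
\|c_{2n-1}-x\|^2 + \|c_{2n-1}-c_{2n-2}\|^2 \;\leq\; \|c_{2n-2}-x\|^2,
\end{equation}
and the analogous inequality for $P_B$. Chaining these two inequalities across one full cycle already yields the Fejér monotonicity $\|c_n-x\|\leq \|c_{n-1}-x\|$, hence the sequence $(\|c_n-x\|)$ is convergent and $(c_n)$ is bounded. Telescoping the same inequalities also gives
\begin{equation}
\sum_{n\geq 1}\|c_n-c_{n-1}\|^2 \;\leq\; \|z-x\|^2 < +\infty,
\end{equation}
so $\|c_n-c_{n-1}\|\to 0$; in particular $\|c_{2n}-c_{2n-1}\|\to 0$.

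Next I would extract weak cluster points: by boundedness and the Banach--Alaoglu theorem, any subsequence of $(c_n)$ has a weakly convergent subsubsequence. Suppose $c_{2n_k}\weak c$. Since $\|c_{2n_k}-c_{2n_k-1}\|\to 0$, we also have $c_{2n_k-1}\weak c$. Because $A$ and $B$ are closed and convex, hence weakly closed, the inclusions $c_{2n_k-1}\in A$ and $c_{2n_k}\in B$ pass to the weak limit, so $c\in A\cap B$. The same conclusion holds for weak cluster points along odd indices.

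Finally I would invoke Opial's lemma: a Fejér monotone sequence (with respect to a nonempty set $S$) all of whose weak cluster points lie in $S$ converges weakly to a point of $S$. In case this tool is not available, the argument is direct: for any two weak cluster points $x_1,x_2\in A\cap B$, both $(\|c_n-x_1\|)$ and $(\|c_n-x_2\|)$ converge by Fejér monotonicity, hence
\begin{equation}
\scal{c_n}{x_1-x_2} \;=\; \tfrac{1}{2}\bigl(\|c_n-x_2\|^2-\|c_n-x_1\|^2\bigr)+\tfrac{1}{2}\bigl(\|x_1\|^2-\|x_2\|^2\bigr)
\end{equation}
converges, and evaluating its limit along the two subsequences yields $\|x_1-x_2\|^2=0$. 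The main obstacle is the passage to a weak limit in the inclusions $c_{2n_k-1}\in A$, $c_{2n_k}\in B$; this is precisely where the Hilbert-space/weak-closedness structure is essential, and where one must carefully pair off even- and odd-indexed subsequences so that the vanishing gap $\|c_{2n}-c_{2n-1}\|\to 0$ can be exploited.
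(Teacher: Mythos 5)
Your argument is correct. Note, however, that the paper does not prove this statement at all: it is recorded as a \emph{Fact} and attributed to Bregman with a citation to \cite{Bregman}, so there is no in-paper proof to compare against. What you have written is the standard modern proof of Bregman's theorem (essentially the one in \cite[Chapter~5]{BC2017}): the firm nonexpansiveness inequality $\|P_Cu-x\|^2+\|u-P_Cu\|^2\leq\|u-x\|^2$ for $x\in C$ gives Fej\'er monotonicity of $(c_n)$ with respect to $A\cap B$ and summability of $\sum_n\|c_n-c_{n-1}\|^2$; the vanishing gap $\|c_{2n}-c_{2n-1}\|\to 0$ together with weak closedness of the closed convex sets $A$ and $B$ places every weak cluster point in $A\cap B$; and the Fej\'er/Opial principle (or your direct computation with $\scal{c_n}{x_1-x_2}$, which is the standard proof of that principle) yields uniqueness of the weak cluster point and hence weak convergence. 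All steps check out, including the polarization identity and the pairing of even- and odd-indexed subsequences. One cosmetic remark: in a Hilbert space you can cite reflexivity (or Eberlein--\v{S}mulian) rather than Banach--Alaoglu to extract weakly convergent subsequences from a bounded sequence, but this does not affect correctness.
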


Note that MAP is simpler than Dykstra's algorithm, but the conclusion 
is also markedly weaker: the convergence is only weak 
(and this indeed can happen, see \cite{Hundal}) and the limit may 
not be $P_{A\cap B}(z)$ (see the next example). 

\begin{example}{\rm\bf (MAP does not produce the projection)}
\label{ex:MAPbad}
Suppose that $X=\RR^2$, 
$A = \RR\times\RM$,
$B = \menge{x=(x\eins,x\zwei)\in\RR^2}{x\eins+x\zwei\leq 0}$,
and $z = (\zeta,\zeta)$, where $\RM = \left]-\infty,0\right]$ and $\zeta > 0$.  
Then $P_{A\cap B}(z) = (0,0)$ while 
$P_A(z) = (\zeta,0)$ and $P_BP_Az = \thalb(\zeta,-\zeta)\in A$. 
Thus, MAP converges in finitely many steps to a point different from
$P_{A\cap B}(z)$ while Dykstra's algorithm follows 
the infinitely many steps of MAP, 
with respect to the boundaries of the sets $A$ and $B$. 
\end{example}

However, when $A$ and $B$ are affine subspaces,
then $(p_n)_\nnn$ lies in $(A-A)^\perp$ and 
$(q_n)$ lies $(B-B)^\perp$; thus, 
the main sequences of Dykstra's algorithm coincide with 
the one produced by MAP in the sense that 
\begin{equation}
(\forall n\geq 1)\quad
c_{2n-1} = a_n 
\;\;\text{and}\;\;
c_{2n} = b_n.
\end{equation}
We record this classical result 
(see Deutsch's monograph \cite{Deutsch} for further information) next.

\begin{fact} {\rm\bf (von Neumann)}
\label{f:vN}
If $A$ and $B$ are closed affine subspaces with nonempty intersection, 
then the MAP sequence coincides with the main sequences of 
Dykstra's algorithm and thus converges strongly to $P_{A\cap B}(z)$.
\end{fact}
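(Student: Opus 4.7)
The plan is to reduce Dykstra's algorithm to MAP in the affine setting by showing that the auxiliary sequences $(p_n)$ and $(q_n)$ are invisible to the projections. Write $L := A-A$ and $M := B-B$ for the closed linear subspaces parallel to $A$ and $B$. The two basic facts about projection onto a closed affine subspace $A = a_0 + L$ that I will use are: (i) $x - P_A(x) \in L^\perp$ for every $x\in X$, and (ii) $P_A(x+u) = P_A(x)$ for every $x\in X$ and every $u\in L^\perp$, since the translation $u$ is orthogonal to every direction in which $A$ extends. The analogous statements hold for $B$ and $M$.

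Next I would establish by induction on $n$ the claim that $p_n \in L^\perp$, $q_n \in M^\perp$, and, for $n\geq 1$, $a_n = P_A(b_{n-1})$ and $b_n = P_B(a_n)$. The base case $n=0$ is trivial since $p_0 = q_0 = 0$. For the inductive step, the hypothesis $p_{n-1}\in L^\perp$ together with (ii) gives
\begin{equation}
a_n \;=\; P_A(b_{n-1}+p_{n-1}) \;=\; P_A(b_{n-1}),
\end{equation}
so that
\begin{equation}
p_n \;=\; b_{n-1} + p_{n-1} - a_n \;=\; \bigl(b_{n-1}-P_A(b_{n-1})\bigr) + p_{n-1},
\end{equation}
which lies in $L^\perp$ by (i) and the inductive hypothesis. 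The identical argument, with $B$, $M$, $(q_n)$ in place of $A$, $L$, $(p_n)$, yields $b_n = P_B(a_n)$ and $q_n\in M^\perp$.

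With $c_0 := z = b_0$, the recursion $a_n = P_A(b_{n-1})$, $b_n = P_B(a_n)$ is precisely MAP, so a direct comparison gives $c_{2n-1} = a_n$ and $c_{2n} = b_n$ for every $n\geq 1$. The strong convergence of $(c_n)$ to $P_{A\cap B}(z)$ is then immediate from Boyle--Dykstra (\cref{f:Dyk}).

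I do not foresee a genuine obstacle: the only subtlety is to handle the affine (rather than merely linear) case correctly, by invoking the characterization $x - P_A(x) \in (A-A)^\perp$ instead of the weaker $P_A(x)\in A$, which is exactly what makes the induction on $(p_n)$ and $(q_n)$ close.
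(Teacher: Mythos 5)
Your proposal is correct and follows essentially the same route as the paper, which sketches exactly this argument in the paragraph preceding the statement: the auxiliary sequences $(p_n)$ and $(q_n)$ lie in $(A-A)^\perp$ and $(B-B)^\perp$ respectively, so the projections ignore them, Dykstra reduces to MAP, and strong convergence then follows from the Boyle--Dykstra theorem. Your induction merely makes the paper's sketch explicit.
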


\emph{
The goal of this paper is to highlight various behaviours of 
Dykstra's algorithm that have received little attention so far:
(1) we discuss when Dykstra's method converges in finitely many steps;
(2) we exhibit an example where the algorithm stalls 
for an arbitrarily long time; and (3) we provide examples where 
MAP produces the same limit as Dykstra, with less computational 
overload and in fewer steps.
}

The paper is organized as follows.
\cref{s:finite} provides necessary conditions for rapid finite convergence.
In \cref{s:aux}, we develop auxiliary results for the case of a line and
a square. 
Convergence results are presented in \cref{s:main}.
The final \cref{s:conc} contains concluding remarks and 
some open problems.

The notation employed is standard and follows, e.g., \cite{BC2017}
and \cite{Deutsch}. 
A word on notation is in order. 
As in \cref{ex:MAPbad} and also later on, we shall encounter 
vectors and sequences in $\RR^2$. 
If $x$ is such a vector and $(x_n)$ is such a sequence, 
then we write $x=(x\eins,x\zwei)$ and 
$x_n = (x_n\eins,x_n\zwei)$ provided we have a need to refer to their coordinates.

\section{Finite convergence of Dykstra's algorithm}

\label{s:finite}

\begin{lemma}
\label{l:0113a}
Let $n\geq 1$.
Then the following hold: 
\begin{enumerate}
\item 
\label{l:0113a1}
If $b_n=a_n$ 
($\Leftrightarrow q_n=q_{n-1}$), 
then 
$a_{n+1} = b_n$
($\Leftrightarrow p_{n+1}=p_n$).
\item 
\label{l:0113a2}
If $a_{n+1}=b_n$
($\Leftrightarrow p_{n+1}=p_n$),
then 
$b_{n+1} = a_{n+1}$
($\Leftrightarrow q_{n+1}=q_{n}$). 
\end{enumerate}
\end{lemma}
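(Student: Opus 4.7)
The plan is to verify the bracketed equivalences first by algebraic manipulation of the Dykstra recursion \eqref{e:dyk}, and then to prove each implication by exhibiting a cancellation inside the argument of the subsequent projection.

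For the equivalences: the auxiliary updates give
\[
q_n - q_{n-1} = (a_n + q_{n-1} - b_n) - q_{n-1} = a_n - b_n,
\qquad
p_{n+1} - p_n = (b_n + p_n - a_{n+1}) - p_n = b_n - a_{n+1},
\]
so $b_n=a_n \Leftrightarrow q_n=q_{n-1}$ and $a_{n+1}=b_n \Leftrightarrow p_{n+1}=p_n$, which takes care of the parenthetical statements in both \cref{l:0113a1} and \cref{l:0113a2}.

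For \cref{l:0113a1}, I would start from the defining identity $p_n = b_{n-1}+p_{n-1}-a_n$ and substitute it into the argument of the next $P_A$-projection:
\[
b_n + p_n = b_n + b_{n-1} + p_{n-1} - a_n.
\]
Under the hypothesis $b_n=a_n$, the first and last terms cancel, leaving $b_n+p_n = b_{n-1}+p_{n-1}$. Therefore $a_{n+1}=P_A(b_n+p_n)=P_A(b_{n-1}+p_{n-1})=a_n=b_n$, as required. Assertion \cref{l:0113a2} is entirely symmetric: using $q_n = a_n + q_{n-1} - b_n$ and the hypothesis $a_{n+1}=b_n$, one sees that $a_{n+1}+q_n = a_n + q_{n-1}$, so $b_{n+1} = P_B(a_{n+1}+q_n) = P_B(a_n+q_{n-1}) = b_n = a_{n+1}$.

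There is no serious obstacle here; the whole argument rests on observing a one-line cancellation in the input of the projection operator, which then lets us reuse the value of the \emph{previous} projection step. The main care needed is in keeping the indices and the roles of the $(a_n,p_n)$ pair versus the $(b_n,q_n)$ pair straight, which is why I would present \cref{l:0113a1} and \cref{l:0113a2} in strictly parallel form.
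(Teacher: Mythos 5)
Your proposal is correct and follows essentially the same route as the paper: the same one-line cancellation $b_n+p_n=a_n+p_n=b_{n-1}+p_{n-1}$ (resp.\ $a_{n+1}+q_n=a_n+q_{n-1}$) inside the argument of the next projection, with the parenthetical equivalences read off directly from \eqref{e:dyk}. You merely spell out the symmetric case \cref{l:0113a2} that the paper leaves as ``analogous.''
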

\begin{proof}
All equivalences follow from \eqref{e:dyk}. 
\ref{l:0113a1}:
Suppose $b_n=a_n$.
Then, using also \eqref{e:dyk},
$b_n+p_n = a_n+p_n = b_{n-1}+p_{n-1}$.
Thus 
$a_{n+1} = P_A(b_n+p_n) = P_A(b_{n-1}+p_{n-1}) = a_n=b_n$. 
\ref{l:0113a2}: 
The proof is analogous to that of \cref{l:0113a1}.
\end{proof}

\begin{remark}
If $a_1=b_0$, then it does not necessarily follow that 
$b_1=a_1$.
Indeed, consider any setting where $A$ is not a subset of $B$,
and $z\in A\smallsetminus B$. 
Then $z=b_0=a_1$ and $b_1=P_Ba_1\neq a_1$.
\end{remark}

\begin{corollary}
\label{c:0113a}
Let $n\geq 1$.
\begin{enumerate}
\item 
\label{c:0113a1}
If $b_n=a_n$, 
then $a_n=b_n=a_{n+1} = b_{n+1} = \cdots = P_{A\cap B}(z)$.
\item 
\label{c:0113a2}
If $a_{n+1}=b_n$, 
then $b_n=a_{n+1} = b_{n+1} = a_{n+2} = \cdots = P_{A\cap B}(z)$.
\end{enumerate}
\end{corollary}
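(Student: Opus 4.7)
The plan is to derive the corollary as an almost immediate consequence of \cref{l:0113a} together with the Boyle--Dykstra convergence result (\cref{f:Dyk}). The idea in both parts is to use the two implications of \cref{l:0113a} in alternation to propagate the triggering equality indefinitely forward, then invoke uniqueness of the strong limit to identify the common value.

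For \ref{c:0113a1}, I would start from the hypothesis $b_n=a_n$ and apply \cref{l:0113a}\ref{l:0113a1} to obtain $a_{n+1}=b_n$. Feeding this into \cref{l:0113a}\ref{l:0113a2} yields $b_{n+1}=a_{n+1}$, which returns us to the hypothesis of \ref{l:0113a1} one index later. A routine induction then gives
\begin{equation*}
a_n = b_n = a_{n+1} = b_{n+1} = a_{n+2} = b_{n+2} = \cdots,
\end{equation*}
so both main sequences are eventually constant from index $n$ onward, with common value $a_n$. For \ref{c:0113a2}, I would run exactly the same alternating-induction scheme, but starting one half-step later: the hypothesis $a_{n+1}=b_n$ feeds \cref{l:0113a}\ref{l:0113a2} to give $b_{n+1}=a_{n+1}$, which then feeds \cref{l:0113a}\ref{l:0113a1} to give $a_{n+2}=b_{n+1}$, and so on, yielding the claimed constant tail from index $n$.

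Once the main sequences are eventually constant, the identification of the constant is forced: by \cref{f:Dyk} both $(a_n)$ and $(b_n)$ converge strongly to $P_{A\cap B}(z)$, and the limit of an eventually constant sequence equals its terminal value. Hence the common constant must be $P_{A\cap B}(z)$, finishing both parts. There is no real obstacle here; the only thing to be careful about is the bookkeeping of indices in the alternating induction so that the hypotheses of \cref{l:0113a}\ref{l:0113a1} and \cref{l:0113a}\ref{l:0113a2} are invoked on the correct indices at each stage.
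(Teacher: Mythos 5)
Your proposal is correct and matches the paper's proof, which is exactly the one-line instruction to combine \cref{l:0113a} with \cref{f:Dyk}; you have simply written out the alternating induction and the limit identification that the paper leaves implicit. No issues.
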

\begin{proof}
Combine \cref{f:Dyk} with \cref{l:0113a}.
\end{proof}

The next result provides conditions under which Dykstra's method 
converges almost immediately and where it behaves exactly like MAP. 

\begin{theorem} {\rm\bf (finite convergence)}
\label{t:finite}
Suppose that one of the following holds:
\begin{enumerate}
\item 
\label{t:finite1}
$z-P_A(z)\in N_A(P_BP_Az)$
\item 
\label{t:finite2}
$A$ is affine and $P_BP_Az\in A$.
\end{enumerate}
Then 
$P_{A\cap B}(z)=P_BP_Az = b_1=a_2=b_2=a_3=b_3=\cdots$;
moreover, the main sequences of MAP and Dykstra's algorithm fully coincide. 
\end{theorem}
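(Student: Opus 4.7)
The plan is to show that under either hypothesis the first Dykstra iterate $b_1 = P_B P_A z$ already lies in $A \cap B$ and equals $P_{A \cap B}(z)$, after which \cref{c:0113a2} applied with $n=1$ delivers the whole chain of equalities automatically and makes the coincidence with MAP transparent.

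First I would verify that \ref{t:finite2} is a special case of \ref{t:finite1}: when $A$ is affine, $N_A(x) = (A-A)^\perp$ for every $x \in A$, and the residual $z - P_A(z)$ always lies in $(A-A)^\perp$; hence assuming $P_B P_A z \in A$ immediately yields $z - P_A(z) \in N_A(P_B P_A z)$. So it suffices to treat \ref{t:finite1}.

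Next I would expand the first step of \eqref{e:dyk} starting from $b_0 = z$, $p_0 = q_0 = 0$, obtaining
\begin{equation*}
a_1 = P_A(z), \quad p_1 = z - P_A(z), \quad b_1 = P_B P_A z, \quad q_1 = P_A z - P_B P_A z.
\end{equation*}
Under \ref{t:finite1} the nonemptiness of $N_A(P_B P_A z)$ implicitly forces $b_1 = P_B P_A z \in A$, so $b_1 \in A \cap B$. The crux is then the identity $a_2 = b_1$: via the projection characterization $u = P_A(v) \Leftrightarrow u \in A$ and $v - u \in N_A(u)$, this reduces to checking $b_1 \in A$ together with $(b_1 + p_1) - b_1 = z - P_A(z) \in N_A(b_1)$, which is exactly hypothesis \ref{t:finite1}.

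Invoking \cref{c:0113a2} then yields $b_1 = a_2 = b_2 = a_3 = \cdots = P_{A \cap B}(z)$, and in particular $P_B P_A z = P_{A \cap B}(z)$. For the coincidence with MAP, note that $b_1 \in A \cap B$ is fixed by both $P_A$ and $P_B$, so the MAP iterates stabilize at $b_1$ from $c_2$ onward and match $c_{2n-1} = a_n$, $c_{2n} = b_n$ term by term. The only mild subtlety is keeping track of the convention $N_A(x) = \varnothing$ for $x \notin A$, which is precisely what lets us extract $b_1 \in A$ from the inclusion in \ref{t:finite1}; beyond that, everything reduces to the projection characterization and \cref{c:0113a2}.
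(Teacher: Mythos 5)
Your proposal is correct and follows essentially the same route as the paper: reduce \ref{t:finite2} to \ref{t:finite1} via $N_A(a)=(A-A)^\perp$ for affine $A$, compute the first Dykstra step explicitly, identify $a_2=b_1$ through the normal-cone characterization of $P_A$ (the paper phrases this as the equivalence $p_1\in N_A(b_1)\Leftrightarrow b_1=P_A(b_1+p_1)$), and then invoke \cref{c:0113a}\cref{c:0113a2} with $n=1$. Your explicit remark about the convention $N_A(x)=\varnothing$ for $x\notin A$ forcing $b_1\in A$ is a point the paper leaves implicit, but it is not a deviation in method.
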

\begin{proof}
Clearly, $a_1=P_Az$, 
$p_1 = b_0+p_0-a_1 = b_0-a_1 = z-P_Az\in N_A(a_1)$,
$b_1 = P_B(a_1+q_0)=P_Ba_1 = P_BP_Az$, and
$q_1 = a_1+q_0-b_1 = a_1-b_1 \in N_B(b_1)$.
Recall that $a_2=P_A(b_1+p_1)$.

\cref{t:finite1}:
We have 
\begin{subequations}
\begin{align}
z-P_Az\in N_A(P_BP_Az)
&\Leftrightarrow 
p_1 \in N_A(b_1)\\
&\Leftrightarrow 
b_1+p_1\in b_1+N_A(b_1)\\
&\Leftrightarrow 
b_1 = P_A(b_1+p_1)\\
&\Leftrightarrow 
b_1 = a_2.
\end{align}
\end{subequations}
Now apply \cref{c:0113a}\cref{c:0113a2} with $n=1$.

\cref{t:finite2}:
Because $A$ is affine, we have 
$(\forall a\in A)$ $N_A(a) = (A-A)^\perp = \ran(\Id-P_A)$.
Hence 
if $P_BP_Az\in A$, then 
$z-P_Az \in (A-A)^\perp = N_A(P_BP_Az)$
and we are done by \cref{t:finite1}.
\end{proof}

Let us present an example of \cref{t:finite} that was obtained differently 
in \cite{Minh}. 

\begin{example} 
{\rm\bf (cone and ball)}
 (See also \cite[Corollary~7.3]{Minh}.)
\label{ex:Minh}
Suppose $K$ is a nonempty closed convex cone in $X$,
and let $B$ be a multiple of the unit ball.
Then $P_{B\cap K} = P_B\circ P_K$.
\end{example}
\begin{proof}
Let $z\in X$. 
Then there exists $\gamma\geq 0$ such that
$P_BP_Kz = \gamma P_Kz$.
By \cite[Example~6~40]{BC2017},
$N_K(P_BP_Kz) = K^\ominus \cap \{\gamma P_Kz\}^\perp$, 
where $K^\ominus = \menge{x\in X}{\max\scal{x}{K}=0}$ is the dual cone of $K$. 
On the other hand,
$z-P_Kz = P_{K^\ominus}z$ and
$P_{K^\ominus}z \perp P_Kz$; see, e.g., \cite[Theorem~6.30]{BC2017}.
Altogether,
\begin{equation}
 z-P_Kz \in N_K(P_BP_Kz)
\end{equation}
and the result follows from \cref{t:finite}\cref{t:finite1}.
\end{proof}

\begin{remark}
Under the assumptions of \cref{ex:Minh},
it is \emph{not} true that $P_{B\cap K} = P_K \circ P_B$;
see \cite[Example~7.5]{Minh} for more on this.
\end{remark}

\begin{remark}{\rm\bf (two intervals)}
By discussing cases, it is straightforward to verify that 
for any two nonempty closed intervals $A$ and $B$
in $X=\RR$, we have 
\begin{equation}
P_{A\cap B} = P_BP_A = P_AP_B.
\end{equation}
Now consider
\begin{equation}
  A = \left[0,\pinf\right[, 
  \;\;
  B = \left[1,\pinf\right[,\;\;
  \text{and}\;\;
  z \in\RR. 
\end{equation}
If $z\geq 1$, i.e., $z\in A\cap B = B$, then 
$a_n\equiv b_n \equiv z = P_{A\cap B}(z)$
and $p_n\equiv q_n\equiv 0$.
Now assume that $z<1$.
Then $n := -\lfloor z\rfloor \in \NN = \{0,1,2,\ldots\}$ and
$0\leq n+z<1$, where $\lfloor\cdot\rfloor$ denotes the floor function. 
It is tedious but straightforward to verify that 
\begin{equation}
(\forall 1\leq k\leq n)\quad 
a_k = 0,\;\;p_k = z+k-1,\;\; b_k=1,\;\;q_k = -k,
\end{equation}
that 
\begin{equation}
a_{n+1} = z+n,\;\; p_{n+1}=0,\;\; 
b_{n+1} = 1,\;\;q_{n+1} =z-1,
\end{equation}
and that 
\begin{equation}
(\forall k\geq n+2)\quad
a_{k}=b_{k}=1=P_{A\cap B}(z),\;\;
p_{k} = q_{k} = 0.
\end{equation}
In particular, when $z=-1$, we have 
$P_Az = 0$, $z-P_Az=-1$, 
$P_BP_Az = 1\in \inte(A)$, and thus 
$N_A(P_BP_Az) = \{0\}$.
Thus, contrasting to \cref{t:finite}\cref{t:finite1}, 
it is possible to have 
$P_{A\cap B}=P_BP_A$ even though
there exists some point $z\in X$ such that $z-P_A(z)\notin N_A(P_BP_Az)$.
\end{remark}

We conclude this section with a characterization of 
equality of Dykstra and MAP.

\begin{lemma}
\label{l:0113b}
Suppose that $A$ is affine and that $c_{0} = b_0$. 
Then 
\begin{equation}
\label{e:0113b}
(\forall n\geq 2) \quad b_n = P_Ba_n
\end{equation}
if and only if 
Dykstra and MAP coincide, i.e.,
$(\forall n\geq 0)$
$c_{2n} = b_n$ and $c_{2n+1} = a_{n+1}$.
\end{lemma}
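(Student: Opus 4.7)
The plan is to split the equivalence, with the ``only if'' direction being essentially immediate from the definition of MAP, and the ``if'' direction relying on a standard observation about Dykstra's auxiliary sequence $(p_n)$ when $A$ is affine.

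For the direction ($\Leftarrow$), assuming Dykstra and MAP coincide, I would just read off that for $n\geq 2$,
\begin{equation}
b_n = c_{2n} = P_B c_{2n-1} = P_B a_n,
\end{equation}
so no real work is needed here.

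For the direction ($\Rightarrow$), the key preliminary step is to establish that
\begin{equation}
(\forall n\geq 0)\quad p_n \in (A-A)^\perp,
\end{equation}
which follows by induction from \eqref{e:dyk}: indeed $p_0=0$, and once $p_{n-1}\in(A-A)^\perp$ we have $p_n = (b_{n-1}+p_{n-1}) - P_A(b_{n-1}+p_{n-1}) \in N_A(a_n) = (A-A)^\perp$ since $A$ is affine. Because for affine $A$ the projection satisfies $P_A(x+v)=P_Ax$ whenever $v\in(A-A)^\perp$, this immediately gives
\begin{equation}
(\forall n\geq 1)\quad a_n = P_A(b_{n-1}+p_{n-1}) = P_A b_{n-1}.
\end{equation}
Also, $b_1 = P_B(a_1+q_0) = P_B a_1$ holds automatically since $q_0=0$, so combined with the hypothesis \eqref{e:0113b} we get $b_n = P_B a_n$ for \emph{every} $n\geq 1$.

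With these two facts in hand, the coincidence of MAP and Dykstra follows by a straightforward induction starting from $c_0=b_0$: if $c_{2n}=b_n$, then $c_{2n+1}=P_A c_{2n}=P_A b_n = a_{n+1}$, and then $c_{2n+2}=P_B c_{2n+1}=P_B a_{n+1}=b_{n+1}$. The only mild subtlety—really the only place the affineness hypothesis is used—is the identification $a_{n+1}=P_A b_n$ via the orthogonality of $p_n$ to $A-A$; everything else is a bookkeeping induction.
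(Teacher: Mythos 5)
Your proposal is correct and follows essentially the same route as the paper: the ``$\Leftarrow$'' direction is read off from $b_n=c_{2n}=P_Bc_{2n-1}=P_Ba_n$, and the ``$\Rightarrow$'' direction is the bookkeeping induction driven by the identity $a_n=P_Ab_{n-1}$ for affine $A$. The only difference is that you spell out the induction showing $p_n\in(A-A)^\perp$, a fact the paper records separately (in the introduction and at the start of \cref{s:aux}) and simply invokes here.
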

\begin{proof}
We always have $c_1=a_1$ and $c_2=b_1$. 
Because $A$ is affine, we also have 
$(\forall n\geq 1)$
$a_n = P_Ab_{n-1}$.

``$\Rightarrow$'':
Because $c_{2} = b_1$, we deduce that 
$c_{3} = P_Ac_{2} = P_Ab_1 = a_{2}$.
In turn, $c_{4} = P_Bc_{3} = P_Ba_{2} = b_{2}$ 
by \eqref{e:0113b}.
Continuing in this fashion, we obtain the conclusion.

``$\Leftarrow$'':
Let $n\geq 2$.
Then $n-1\geq 1$ and 
so 
$b_n = c_{2n} = P_Bc_{2(n-1)+1} = P_Ba_n$. 
\end{proof}

\section{Line and square: set up and auxiliary results}

\label{s:aux}

We assume from now on that 
\begin{empheq}[box=\mybluebox]{equation}
\text{$X=\RR^2$ and $z\in X$,}
\end{empheq}
that 
\begin{equation}
A := \text{is a line in $X$,}
\end{equation}
and that 
\begin{empheq}[box=\mybluebox]{equation}
B := [-1,1]\times[-1,1]
\end{empheq}
is a square of side length $2$ (the unit ball with respect to the max-norm). 
Specifically, in view of symmetry, we also assume that 
\begin{empheq}[box=\mybluebox]{equation}
  u\in A \cap B,
  \;\;
  v \in V := (A-A)^\perp,\;\;\|v\|=1;
  \quad\text{thus,}\quad
  A = u + \{v\}^\perp.
\end{empheq}
and that 
\begin{empheq}[box=\mybluebox]{equation}
v\in\RPP^2,\;\;
-1<u\kkk{1},
\;\;\text{and}\;\;
u\kkk{2} = 1. 
\end{empheq}
(We discuss the case when $v\kkk{1}v\kkk{2}=0$ separately later.)
Then, for every $x\in X$, 
$P_Ax = u+P_{\{v\}^\perp}(x-u) = u+(x-u)-\scal{x-u}{v}v$;
thus, 
\begin{empheq}[box=\mybluebox]{equation}
  \label{e:projs}
P_Ax = x - \scal{x-u}{v}v
\quad\text{and}\quad
P_Bx = P_B(x\kkk{1},x\kkk{2}) = \big(P_{[-1,1]}x\kkk{1},P_{[-1,1]}x\kkk{2}\big).
\end{empheq}
Finally, assume that 
\begin{empheq}[box=\mybluebox]{equation}
\text{$(a_n)$ and $(b_n)$ are the main sequences of Dykstra's algorithm (see \eqref{e:dyk})}
\end{empheq}
while $(p_n)$ and $(q_n)$ are the auxiliary sequences. 
Because $A$ is an affine subspace, the sequence $(p_n)$ lies
entirely in $(A-A)^\perp$ and thus we always have 
\begin{equation}
a_{n} = P_Ab_{n-1},
\end{equation}
where $n\geq 1$; in other words, we can simply ignore $p_{n-1}$ when computing
$a_n = P_A(b_{n-1}+p_n) = P_Ab_{n-1}$. 

In the remainder of this section, we collect
various technical results that will make the proofs
of the main result much simpler. 

\begin{lemma}
  \label{l:1}
Suppose that $b_n = (b_n\kkk{1},1)$, where $b_n\kkk{1}\leq u\kkk{1}$. 
Then
\begin{equation}
  a_{n+1}= b_n+(u\kkk{1}-b_n\kkk{1})v\kkk{1}v.
\end{equation}
\end{lemma}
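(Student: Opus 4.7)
The plan is a direct computation, leveraging the explicit projection formulas already established in the setup of \cref{s:aux}.

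Since $A$ is an affine subspace, the auxiliary sequence $(p_n)$ lies in $(A-A)^\perp$, so as noted immediately before the lemma, $a_{n+1} = P_A b_n$. By the projection formula \eqref{e:projs},
\begin{equation*}
a_{n+1} = P_A b_n = b_n - \scal{b_n - u}{v}\, v.
\end{equation*}
The next step is to exploit the hypothesis $b_n = (b_n\kkk{1}, 1)$ together with the standing assumption $u\kkk{2} = 1$: this forces
\begin{equation*}
b_n - u = (b_n\kkk{1} - u\kkk{1},\, 0),
\end{equation*}
so that $\scal{b_n - u}{v} = (b_n\kkk{1} - u\kkk{1})\, v\kkk{1}$. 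Substituting gives
\begin{equation*}
a_{n+1} = b_n - (b_n\kkk{1} - u\kkk{1})\, v\kkk{1}\, v = b_n + (u\kkk{1} - b_n\kkk{1})\, v\kkk{1}\, v,
\end{equation*}
which is the stated formula.

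There is no real obstacle; the entire argument collapses onto the fact that when $b_n$ lies on the top edge of the square at the same height as $u$, the vector $b_n - u$ is horizontal, making the inner product with $v$ easy to evaluate. The extra hypothesis $b_n\kkk{1}\leq u\kkk{1}$ is not used to derive the formula itself; it only ensures that the correction term $(u\kkk{1} - b_n\kkk{1})v\kkk{1}v$ points into $\RPP^2$ (recall $v\in\RPP^2$), which will be the geometrically relevant sign for chaining this lemma into the later results of \cref{s:main}.
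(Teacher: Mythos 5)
Your proof is correct and follows exactly the paper's own argument: reduce to $a_{n+1}=P_Ab_n$, note that $u\kkk{2}=1$ makes $b_n-u$ horizontal, and substitute into the projection formula \eqref{e:projs}. Your observation that the hypothesis $b_n\kkk{1}\leq u\kkk{1}$ is not needed for the identity itself (only for the sign of $\scal{b_n-u}{v}$, which the paper records as $\leq 0$) is accurate.
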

\begin{proof}
Note that $b_n-u = (b_n\kkk{1}-u\kkk{1},1-1)=(b_n\kkk{1}-u\kkk{1},0)$. 
Hence $\scal{b_n-u}{v} = (b_n\kkk{1}-u\kkk{1})v\kkk{1}\leq 0$.
It follows from \eqref{e:projs} that 
\begin{align}
a_{n+1}
&= 
P_Ab_n = b_n - \scal{b_n-u}{v}v = b_n+(u\kkk{1}-b_n\kkk{1})v\kkk{1}v
\end{align}
as announced. 
\end{proof}

\begin{lemma}
\label{l:2}
Suppose that $b_n = (b_n\kkk{1},1)$, where $n\geq 1$. 
If $b_n\kkk{1}\leq u\kkk{1}$, 
then $a_{n+1}\kkk{2}+q_n\kkk{2} \geq 1$ and thus $b_{n+1}\kkk{2}=1$; 
moreover, 
if the first inequality is strict, then so is the second.
\end{lemma}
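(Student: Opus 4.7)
The plan is to combine \cref{l:1} with a short analysis of the second coordinate of $q_n$ driven by the constraint $b_n\kkk{2}=1$. Everything will reduce to showing $q_n\kkk{2}\geq 0$ and $a_{n+1}\kkk{2}\geq 1$, after which the conclusion about $b_{n+1}\kkk{2}$ falls out from the componentwise formula for $P_B$.

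First I would apply \cref{l:1} to write
\[
a_{n+1} \;=\; b_n + (u\kkk{1}-b_n\kkk{1})\,v\kkk{1}\,v,
\]
and read off the second coordinate:
\[
a_{n+1}\kkk{2} \;=\; 1 + (u\kkk{1}-b_n\kkk{1})\,v\kkk{1}\,v\kkk{2}.
\]
Since $v\in\RPP^2$ and $b_n\kkk{1}\leq u\kkk{1}$, the added term is nonnegative, so $a_{n+1}\kkk{2}\geq 1$; and if $b_n\kkk{1}<u\kkk{1}$, the added term is strictly positive, so $a_{n+1}\kkk{2}>1$.

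Next I would show $q_n\kkk{2}\geq 0$. By \eqref{e:dyk}, $b_n = P_B(a_n+q_{n-1})$, and the componentwise formula \eqref{e:projs} gives $b_n\kkk{2} = P_{[-1,1]}(a_n\kkk{2}+q_{n-1}\kkk{2})$. The hypothesis $b_n\kkk{2}=1$ therefore forces $a_n\kkk{2}+q_{n-1}\kkk{2}\geq 1$, and then the update rule $q_n = a_n+q_{n-1}-b_n$ yields
\[
q_n\kkk{2} \;=\; a_n\kkk{2}+q_{n-1}\kkk{2} - 1 \;\geq\; 0.
\]
(For $n=1$ this already works since $q_0=0$; for $n\geq 2$ the argument is identical.)

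Combining the two bounds gives $a_{n+1}\kkk{2}+q_n\kkk{2}\geq 1$, with strict inequality whenever $b_n\kkk{1}<u\kkk{1}$. The final step is to apply \eqref{e:projs} once more:
\[
b_{n+1}\kkk{2} \;=\; P_{[-1,1]}\bigl(a_{n+1}\kkk{2}+q_n\kkk{2}\bigr) \;=\; 1,
\]
since the input exceeds $1$. There is no real obstacle here; the only thing to be careful about is the base-case bookkeeping for $n=1$ (handled by $q_0=0$) and tracking which inequalities are strict, but both are immediate from the definitions and the positivity of the coordinates of $v$.
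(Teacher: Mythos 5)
Your proposal is correct and follows essentially the same route as the paper's proof: it uses \cref{l:1} to get $a_{n+1}\kkk{2}\geq 1$ (strict when $b_n\kkk{1}<u\kkk{1}$), derives $q_n\kkk{2}\geq 0$ from $b_n=P_B(a_n+q_{n-1})$ together with $b_n\kkk{2}=1$, and adds the two bounds. The only difference is cosmetic (you spell out the final projection step and the $n=1$ base case slightly more explicitly).
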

\begin{proof}
Recall that $b_n=P_B(a_n+q_{n-1})$.
Thus 
$a_n\kkk{2}+q_{n-1}\kkk{2}\geq 1$ and
hence
\begin{equation}
q_{n}\kkk{2} = a_{n}\kkk{2}+q_{n-1}\kkk{2}-b_n\kkk{2} =
a_{n}\kkk{2}+q_{n-1}\kkk{2}- 1 \geq 0.
\end{equation}
On the other hand, \cref{l:1} yields
\begin{equation}
a_{n+1}\kkk{2} = 1 + \big(u\kkk{1}-b_n\kkk{1}\big)v\kkk{1}v\kkk{2}\geq 1.
\end{equation}
Altogether, 
\begin{equation}
  a_{n+1}\kkk{2}+q_n\kkk{2} \geq 1+0=1,
\end{equation}
and the inequality is strict when $b_n\kkk{1}<u\kkk{1}$. 
\end{proof}

\begin{lemma}
\label{l:3}
Suppose that 
$b_1=b_2=\cdots=b_n = (-1,1)$, where $n\geq 1$. 
Then 
\begin{equation}
q_n = (n-1)\big(u\kkk{1}+1\big)v\kkk{1}v + a_1+(1,-1).
\end{equation}
\end{lemma}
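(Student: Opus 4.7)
The plan is to use the telescoping structure of the auxiliary sequence $(q_k)$ together with the explicit formula for $a_{k+1}$ supplied by \cref{l:1}, and then collect terms.

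First, I would telescope. Since $q_0 = 0$ and $q_k = a_k + q_{k-1} - b_k$ for all $k \geq 1$, an immediate induction gives
\begin{equation}
q_n \;=\; \sum_{k=1}^n a_k \;-\; \sum_{k=1}^n b_k.
\end{equation}
The hypothesis $b_1 = \cdots = b_n = (-1,1)$ makes the second sum trivial: $\sum_{k=1}^n b_k = (-n,n)$, so
\begin{equation}
q_n \;=\; \sum_{k=1}^n a_k \;+\; (n,-n).
\end{equation}

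Next I would evaluate the $a_k$'s. For each $k\in\{2,\dots,n\}$, the vector $b_{k-1} = (-1,1)$ has first coordinate $-1 \leq u\kkk{1}$ (recall the standing assumption $-1<u\kkk{1}$), so \cref{l:1} applies and yields
\begin{equation}
a_k \;=\; b_{k-1} + (u\kkk{1}-b_{k-1}\kkk{1})v\kkk{1}v
\;=\; (-1,1) + (u\kkk{1}+1)v\kkk{1}v.
\end{equation}
Summing, the $k=1$ term $a_1$ stays as is, while the remaining $n-1$ terms contribute
\begin{equation}
\sum_{k=2}^n a_k \;=\; (n-1)(-1,1) \;+\; (n-1)(u\kkk{1}+1)v\kkk{1}v.
\end{equation}

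Finally I would just combine: plugging into the telescoped identity,
\begin{equation}
q_n \;=\; a_1 \;+\; (n-1)(-1,1) \;+\; (n-1)(u\kkk{1}+1)v\kkk{1}v \;+\; (n,-n),
\end{equation}
and the two scalar vectors simplify as $(n-1)(-1,1) + (n,-n) = (1,-1)$, giving exactly the claimed formula. There is no real obstacle here; the only thing to be careful about is the index range of the $a_k$ appearing in the sum (the hypothesis $b_{k-1} = (-1,1)$ is available precisely for $k \in \{2,\dots,n\}$, so $a_1$ must be left untouched and carried through as a free term, which is why it appears explicitly in the conclusion).
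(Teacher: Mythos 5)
Your proof is correct: the telescoping identity $q_n=\sum_{k=1}^n a_k-\sum_{k=1}^n b_k$ is valid since $q_0=0$, \cref{l:1} applies to each $b_{k-1}=(-1,1)$ because $-1<u\kkk{1}$, and the arithmetic checks out (including the empty-sum base case $n=1$). This is essentially the paper's argument — the paper runs the same computation as an induction on $n$ using \cref{l:1} in the inductive step, whereas you unroll it as an explicit sum; the ingredients and the bookkeeping are identical.
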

\begin{proof}
We verify this using mathematical induction on $n\geq 1$.

Base case: If $(-1,1)=b_1 = P_B(a_1+q_0)=P_B(a_1)$, then 
$q_1 = a_1+q_0-b_1 = a_1-b_1 = a_1+(1,-1)$ as claimed.

Inductive step: Assume that the result holds for some $n\geq 1$ 
and that $(-1,1)=b_1=\cdots=b_n=b_{n+1}$.
By the inductive hypothesis,
\begin{equation}
q_n = (n-1)\big(u\kkk{1}+1\big)v\kkk{1}v + a_1+(1,-1).
\end{equation}
Hence, using also \cref{l:1}, 
\begin{subequations}
\begin{align}
q_{n+1} &= a_{n+1}+q_n-b_{n+1}\\
&=
b_n+(u\kkk{1}-b_n\kkk{1})v\kkk{1}v 
+ (n-1)(u\kkk{1}+1)v\kkk{1}v + a_1+(1,-1) - b_{n+1}\\
&=
(-1,1)+(u\kkk{1}+1)v\kkk{1}v 
+ (n-1)(u\kkk{1}+1)v\kkk{1}v + a_1+(1,-1) + (1,-1)\\
&=
(n+1-1)(u\kkk{1}+1)v\kkk{1}v + a_1+(1,-1),
\end{align}
\end{subequations}
as required. 
\end{proof}

\begin{lemma}
  \label{l:4}
Suppose that $b_n\kkk{1}\leq u\kkk{1}$ and $b_n=(b_n\kkk{1},1)$, 
where $n\geq 1$. 
Then
\begin{equation}
b_{n+1} = (-1,1)
\quad\Leftrightarrow\quad
a_{n+1}\kkk{1}+q_n\kkk{1} \leq -1.
\end{equation}
\end{lemma}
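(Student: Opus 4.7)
The plan is to peel off the two coordinates of $b_{n+1}$ separately, using \eqref{e:projs} which tells us that $P_B$ acts componentwise as $P_{[-1,1]}$. By \eqref{e:dyk},
\begin{equation*}
b_{n+1} = P_B(a_{n+1}+q_n) = \bigl(P_{[-1,1]}(a_{n+1}\kkk{1}+q_n\kkk{1}),\, P_{[-1,1]}(a_{n+1}\kkk{2}+q_n\kkk{2})\bigr),
\end{equation*}
so the equivalence will reduce to understanding each coordinate of $a_{n+1}+q_n$.

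For the second coordinate, I would invoke \cref{l:2}: since $b_n\kkk{1}\leq u\kkk{1}$ and $b_n = (b_n\kkk{1},1)$, it gives directly $a_{n+1}\kkk{2}+q_n\kkk{2}\geq 1$, hence $P_{[-1,1]}(a_{n+1}\kkk{2}+q_n\kkk{2}) = 1$. Therefore the second coordinate of $b_{n+1}$ is automatically $1$, regardless of the sign condition being tested, and the condition $b_{n+1}=(-1,1)$ collapses to the single scalar condition $b_{n+1}\kkk{1}=-1$.

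For the first coordinate, the equivalence is just the definition of the clipping projection onto $[-1,1]$: $P_{[-1,1]}(t)=-1$ if and only if $t\leq -1$. Applying this with $t=a_{n+1}\kkk{1}+q_n\kkk{1}$ yields
\begin{equation*}
b_{n+1}\kkk{1}=-1 \quad\Leftrightarrow\quad a_{n+1}\kkk{1}+q_n\kkk{1}\leq -1,
\end{equation*}
which combined with the automatic second coordinate completes the proof.

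There is essentially no obstacle here: the statement is a direct consequence of \cref{l:2} and the componentwise structure of $P_B$ given in \eqref{e:projs}. The only point that requires a moment's care is observing that the hypothesis of \cref{l:2} (namely $b_n = (b_n\kkk{1},1)$ with $b_n\kkk{1}\leq u\kkk{1}$) matches precisely the hypothesis of this lemma, so no additional case analysis is needed.
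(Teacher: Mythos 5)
Your proof is correct and follows essentially the same route as the paper: the second coordinate is handled by \cref{l:2} and the first by the clipping characterization $P_{[-1,1]}(t)=-1 \Leftrightarrow t\leq -1$, applied to the componentwise formula for $P_B$ in \eqref{e:projs}. The only (cosmetic) difference is that you invoke \cref{l:2} up front to reduce both directions to a single scalar condition, whereas the paper uses it only for the ``$\Leftarrow$'' direction.
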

\begin{proof}
Recall that $b_{n+1} = P_B(a_{n+1}+q_n)$.

``$\Rightarrow$'':
If $b_{n+1} = (-1,1)$, 
then, 
since $b_{n+1}=P_B(a_{n+1}+q_n)$ 
and $b_{n+1}\kkk{1} =-1$, we have 
$a_{n+1}\kkk{1}+q_n\kkk{1}\leq -1$.

``$\Leftarrow$'':
Clear from \cref{l:2}.
\end{proof}

\begin{lemma}
\label{l:5}
Suppose that $b_1=\cdots = b_n = (-1,1)$, where $n\geq 1$. 
Then $q_n\kkk{1}\leq 0$, 
\begin{equation}
  \label{e:1911092am}
a_{n+1}\kkk{1}+q_n\kkk{1} = a_1\kkk{1}+n\big(u\kkk{1}+1\big)v^2\kkk{1}
\end{equation}
and 
\begin{equation}
  \label{e:191109c}
  a_{n+1}\kkk{1}+q_n\kkk{1}\leq a_{n+1}\kkk{1}<u\kkk{1};
\end{equation}
moreover, 
\begin{equation}
  \label{e:1911092am+}
b_{n+1} = (-1,1)
\quad
\Leftrightarrow
\quad
n\big(u\kkk{1}+1\big)v^2\kkk{1}+a_1\kkk{1}\leq -1
\quad
\Leftrightarrow
\quad
n\leq \left\lfloor \frac{\displaystyle -1-a_1\kkk{1}}{\displaystyle(u\kkk{1}+1)v^2\kkk{1}} \right\rfloor. 
\end{equation}
If $b_{n+1}\neq (-1,1)$, 
then 
$-1 < a_{n+1}\kkk{1} + q_n\kkk{1} \leq a_{n+1}\kkk{1}<u\kkk{1}\leq 1$,
$b_{n+1}\kkk{1} = a_{n+1}\kkk{1} + q_n\kkk{1}$, 
$b_{n+1}\kkk{2} = 1$, 
and
$q_{n+1}\kkk{1} = 0$. 
\end{lemma}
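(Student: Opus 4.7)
The plan is to reduce everything to explicit formulas for $a_{n+1}$ and $q_n$ by combining \cref{l:1,l:3} with the hypothesis $b_1=\cdots=b_n=(-1,1)$, and then read off each conclusion.

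First I would establish the sign claim $q_n\kkk{1}\leq 0$. Since $b_k=(-1,1)=P_B(a_k+q_{k-1})$ for each $k\in\{1,\ldots,n\}$, the first-coordinate projection forces $a_k\kkk{1}+q_{k-1}\kkk{1}\leq -1$; plugging into $q_k\kkk{1}=a_k\kkk{1}+q_{k-1}\kkk{1}-b_k\kkk{1}=a_k\kkk{1}+q_{k-1}\kkk{1}+1$ gives $q_k\kkk{1}\leq 0$, and in particular $q_n\kkk{1}\leq 0$. Next, \cref{l:1} applied to $b_n=(-1,1)$ yields $a_{n+1}=(-1,1)+(u\kkk{1}+1)v\kkk{1}v$, so $a_{n+1}\kkk{1}=-1+(u\kkk{1}+1)v^2\kkk{1}$; and \cref{l:3} gives $q_n\kkk{1}=(n-1)(u\kkk{1}+1)v^2\kkk{1}+a_1\kkk{1}+1$. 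Adding these two expressions collapses the telescope to
\begin{equation}
a_{n+1}\kkk{1}+q_n\kkk{1}=a_1\kkk{1}+n(u\kkk{1}+1)v^2\kkk{1},
\end{equation}
which is \eqref{e:1911092am}. For \eqref{e:191109c}, the first inequality is just $q_n\kkk{1}\leq 0$; the second uses $v^2\kkk{1}<1$ (because $\|v\|=1$ and $v\kkk{2}>0$) to write $a_{n+1}\kkk{1}=-1+(u\kkk{1}+1)v^2\kkk{1}<-1+(u\kkk{1}+1)=u\kkk{1}$.

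Then I would derive \eqref{e:1911092am+} by invoking \cref{l:4} (whose hypothesis $b_n\kkk{1}\leq u\kkk{1}$ is trivially satisfied since $b_n\kkk{1}=-1<u\kkk{1}$): $b_{n+1}=(-1,1)$ is equivalent to $a_{n+1}\kkk{1}+q_n\kkk{1}\leq -1$, which by \eqref{e:1911092am} rewrites as $n(u\kkk{1}+1)v^2\kkk{1}+a_1\kkk{1}\leq -1$; dividing by the strictly positive quantity $(u\kkk{1}+1)v^2\kkk{1}$ and using integrality of $n$ gives the floor characterization.

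Finally, for the case $b_{n+1}\neq(-1,1)$: the failure of \cref{l:4} gives $a_{n+1}\kkk{1}+q_n\kkk{1}>-1$, and combining with \eqref{e:191109c} and $u\kkk{1}\leq 1$ (since $u\in B$) yields the chain $-1<a_{n+1}\kkk{1}+q_n\kkk{1}\leq a_{n+1}\kkk{1}<u\kkk{1}\leq 1$. Hence $P_{[-1,1]}$ acts as the identity on the first coordinate, so $b_{n+1}\kkk{1}=a_{n+1}\kkk{1}+q_n\kkk{1}$; \cref{l:2} supplies $b_{n+1}\kkk{2}=1$; and then $q_{n+1}\kkk{1}=a_{n+1}\kkk{1}+q_n\kkk{1}-b_{n+1}\kkk{1}=0$. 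The main potential stumbling block is keeping track of the interplay between the two coordinates when verifying that $P_B$ acts as the identity on the first coordinate and as truncation on the second, but \cref{l:2} handles the second coordinate cleanly, so the proof should be essentially a careful bookkeeping exercise on top of \cref{l:1,l:2,l:3,l:4}.
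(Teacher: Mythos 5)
Your proposal is correct and follows essentially the same route as the paper: it assembles the explicit formulas from Lemmas \ref{l:1} and \ref{l:3}, invokes Lemma \ref{l:4} for the equivalence and Lemma \ref{l:2} for the second coordinate, exactly as the paper does. The only (harmless) cosmetic differences are that you derive $q_n\kkk{1}\leq 0$ by unrolling the recursion for $q_k\kkk{1}$ rather than citing $q_n\in N_B(b_n)=\RM\times\RP$, and you obtain $a_{n+1}\kkk{1}<u\kkk{1}$ directly from $v^2\kkk{1}<1$ instead of the paper's chain of equivalences.
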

\begin{proof}
Clearly, $q_n \in N_B(b_n) = \RM\times \RP$, so $q_n\kkk{1}\leq 0$. 
Because $b_1 = P_B(a_1+q_0)=P_Ba_1$, it is clear that $a_1\kkk{1}\leq -1$ and so 
\begin{equation}
  -1-a_1\kkk{1}\geq 0.
\end{equation}
From \cref{l:3}, we have 
\begin{equation}
q_n = (n-1)(u\kkk{1}+1)v\kkk{1}v + a_1+(1,-1);
\end{equation}
in particular, 
\begin{equation}
q_n\kkk{1} = (n-1)(u\kkk{1}+1)v^2\kkk{1} + a_1\kkk{1}+1. 
\end{equation}
From \cref{l:1}, we have 
\begin{equation}
  a_{n+1}\kkk{1}= -1+(u\kkk{1}+1)v^2\kkk{1}.
\end{equation}
Adding the last two equations gives \eqref{e:1911092am}.
Note that $-1 < u\kkk{1}$
$\Leftrightarrow$
$v^2\kkk{1}-1 < u\kkk{1}(1-v^2\kkk{1})$
$\Leftrightarrow$
$-1+u\kkk{1}v^2\kkk{1}+v^2\kkk{1} < u\kkk{1}$
$\Leftrightarrow$
$a_{n+1}\kkk{1}<u\kkk{1}$, which gives
\eqref{e:191109c} because $q_n\kkk{1}\leq 0$. 
  
On the other hand, from \cref{l:4}, we have
\begin{equation}
b_{n+1} = (-1,1)
\quad\Leftrightarrow\quad
a_{n+1}\kkk{1}+ q_n\kkk{1} \leq -1.
\end{equation}

Therefore, using \eqref{e:1911092am}, we obtain 
\begin{subequations}
\begin{align}
b_{n+1} = (-1,1)
&\quad\Leftrightarrow\quad
n(u\kkk{1}+1)v^2\kkk{1} + a_1\kkk{1} \leq -1\\
&\quad\Leftrightarrow\quad
n \leq \frac{-1-a_1\kkk{1}}{(u\kkk{1}+1)v^2\kkk{1}},
\end{align}
\end{subequations}
and \eqref{e:1911092am+} follows.

Now assume that $b_{n+1} \neq (-1,1)$.
By \cref{l:4}, 
\begin{equation}
 -1< a_{n+1}\kkk{1}+q_n\kkk{1}.
\end{equation}
But we know already that 
$a_{n+1}\kkk{1}+q_n\kkk{1} \leq a_{n+1}\kkk{1}<u\kkk{1}\leq 1$.

The formula for $b_{n+1}\kkk{1}$ is now clear. 
The statement that $b_{n+1}\kkk{2}=1$ is a consequence of \cref{l:2}. 
\end{proof}

\begin{lemma}
\label{l:5+}
Suppose $n\geq 1$,
$-1<b_n\eins<u\eins$, 
$b_n\zwei=1$, 
$a_n\eins<u\eins$,
and 
$q_{n-1}\eins\leq 0$.
Then
$-1<a_{n}\eins+q_{n-1}\eins < b_{n+1}\eins = a_{n+1}\eins<u\eins$,
$b_{n+1}\zwei = 1$, 
$q_n\eins=0$, 
$q_n\zwei\geq 0$,
and 
$b_{n+1}=P_Ba_{n+1}$. 
\end{lemma}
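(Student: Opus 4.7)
The plan is to prove the seven conclusions essentially in order, using only the defining equations \eqref{e:dyk}, the product structure of $P_B$ on $[-1,1]\times[-1,1]$, and \cref{l:1,l:2}.

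First I would unpack $b_n=P_B(a_n+q_{n-1})$ componentwise. Since $P_B$ acts coordinatewise as $P_{[-1,1]}$ and the hypothesis says $b_n\eins\in(-1,1)$ (interior of $[-1,1]$), the first-coordinate projection is the identity on $a_n\eins+q_{n-1}\eins$, forcing $a_n\eins+q_{n-1}\eins=b_n\eins$. In particular $-1<a_n\eins+q_{n-1}\eins$, and from $q_n=a_n+q_{n-1}-b_n$ we get $q_n\eins=0$. Since $b_n\zwei=1$ lies on the upper boundary, $a_n\zwei+q_{n-1}\zwei\geq 1$, hence $q_n\zwei=a_n\zwei+q_{n-1}\zwei-1\geq 0$. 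This handles two of the conclusions.

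Next I would apply \cref{l:1} (whose hypotheses $b_n=(b_n\eins,1)$ and $b_n\eins\leq u\eins$ are in force) to obtain
\begin{equation}
a_{n+1}=b_n+(u\eins-b_n\eins)v\eins v.
\end{equation}
Reading off coordinates gives $a_{n+1}\eins=(1-v\eins^2)b_n\eins+v\eins^2u\eins$, a convex combination with weight $v\eins^2\in(0,1)$ (using $v\in\RPP^2$ and $\|v\|=1$, so $v\zwei\neq 0$). Combined with the strict inequality $b_n\eins<u\eins$ this yields $b_n\eins<a_{n+1}\eins<u\eins$, and together with step one gives the chain $-1<a_n\eins+q_{n-1}\eins<a_{n+1}\eins<u\eins$. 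Similarly $a_{n+1}\zwei=1+(u\eins-b_n\eins)v\eins v\zwei>1$ since both factors are strictly positive.

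To finish I would compute $b_{n+1}=P_B(a_{n+1}+q_n)$. The first coordinate is $a_{n+1}\eins+q_n\eins=a_{n+1}\eins$ which lies in $(-1,u\eins)\subseteq(-1,1)$, so $P_{[-1,1]}$ leaves it fixed; the second is $a_{n+1}\zwei+q_n\zwei>1+0=1$ (equivalently, invoke \cref{l:2}), so it is clipped to $1$. Hence $b_{n+1}=(a_{n+1}\eins,1)$, which simultaneously yields $b_{n+1}\eins=a_{n+1}\eins$ and $b_{n+1}\zwei=1$. For the final claim $b_{n+1}=P_Ba_{n+1}$, note that the same two inequalities $a_{n+1}\eins\in(-1,1)$ and $a_{n+1}\zwei>1$ show $P_Ba_{n+1}=(a_{n+1}\eins,1)=b_{n+1}$. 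There is no real obstacle here; the only subtlety is keeping track of which inequalities are strict, since the strict form $b_n\eins<u\eins$ is what forces both $a_{n+1}\eins<u\eins$ (via the convex combination being nontrivial) and $a_{n+1}\zwei>1$ (via positivity of $v\eins v\zwei$), both of which are essential for the last step.
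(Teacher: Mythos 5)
Your proof is correct and takes essentially the same route as the paper's: both hinge on \cref{l:1} and on writing $a_{n+1}\eins$ as the convex combination $(1-v\eins^2)\big(a_n\eins+q_{n-1}\eins\big)+v\eins^2 u\eins$, with the strict inequalities coming from $v\eins^2\in\left]0,1\right[$ and $b_n\eins<u\eins$. The only cosmetic difference is that you obtain $q_n\eins=0$ and $q_n\zwei\geq 0$ directly from the coordinatewise form of $P_B$ and the recursion $q_n=a_n+q_{n-1}-b_n$, whereas the paper reads them off from $q_n\in N_B(b_n)$; these are equivalent.
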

\begin{proof}
We have $q_n\in N_B(b_n)$ and so 
$q_n\eins = 0$ and $q_n\zwei\geq 0$. 
Hence 
$b_{n+1}\eins = (P_B(a_{n+1}+q_{n}))\eins = (P_Ba_{n+1})\eins$. 
We also have
$b_n\eins = a_n\eins+q_{n-1}\eins$ 
because $-1<b_n\eins<1$. 
Now $a_{n+1} = 
b_n + (u\eins-b_n\eins)v\eins v$ 
by \cref{l:1}.
On the one hand,
\begin{subequations}
\begin{align}
a_{n+1}\eins 
&= a_n\eins+q_{n-1}\eins + 
(u\eins-(a_n\eins+q_{n-1}\eins))v^2\eins \\
&=
(1-v^2\eins)\big(a_n\eins+q_{n-1}\eins\big)
+v^2\eins u\eins
\end{align}
\end{subequations}
thus 
\begin{equation}
-1<a_{n}\eins+q_{n-1}\eins < a_{n+1}\eins
=a_{n+1}\eins+q_n\eins < u\eins \leq 1
\end{equation}
and 
$b_{n+1}\eins = a_{n+1}\eins \in \left]-1,1\right[$. 
On the other hand,
\begin{subequations}
\begin{align}
a_{n+1}\zwei
&= b_n\zwei+(u\eins-b_n\eins)v\eins v\zwei\\
&= 1+(u\eins-b_n\eins)v\eins v\zwei\\
&> 1
\end{align}
\end{subequations}
and thus $a_{n+1}\zwei+q_n\zwei \geq a_{n+1}\zwei>1$
which yields $b_{n+1}\zwei  = (P_Ba_{n+1})\zwei = 1$ 
and $q_{n+1}\zwei\geq 0$.
Altogether, $b_{n+1} = P_Ba_{n+1}$.
\end{proof}

\begin{corollary}
\label{c:7}
Suppose that 
$-1<b_n\eins<u\eins$, 
$b_n\zwei=1$, 
$a_n\eins<u\eins$, 
and 
$q_{n-1}\eins\leq 0$, where $n\geq 1$. 
Then for every $k\geq 1$, we have
$b_{n+k} = P_B(a_{n+k})$,
$-1<b_{n+k-1}\eins < b_{n+k}\eins < u\eins$ and 
$b_{n+k-1}\zwei=1$.
In other words, starting with $a_{n+1}$,
the main sequences of Dykstra coincide with 
the MAP sequence 
(starting at $a_{n+1}$) and all converge to 
$P_{A\cap B}z$, which is $u$ in this setting. 
\end{corollary}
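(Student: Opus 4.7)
The plan is to prove the corollary by induction on $k \geq 1$, with \cref{l:5+} serving as the inductive engine. The crucial observation is that the conclusions of \cref{l:5+} at index $n$ deliver precisely the hypotheses needed to reapply the lemma at index $n+1$: from $-1 < a_{n+1}\eins = b_{n+1}\eins < u\eins$, $b_{n+1}\zwei = 1$, and $q_n\eins = 0$, the four preconditions ``$-1 < b_{n+1}\eins < u\eins$, $b_{n+1}\zwei = 1$, $a_{n+1}\eins < u\eins$, and $q_n\eins \leq 0$'' are all in place for the next step. A routine induction therefore yields $b_{n+k} = P_B a_{n+k}$, $b_{n+k}\zwei = 1$, and $-1 < b_{n+k-1}\eins < b_{n+k}\eins < u\eins$ for every $k \geq 1$. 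The strict inequality $b_{n+k-1}\eins < b_{n+k}\eins$ follows from \cref{l:1}, which gives $a_{n+k}\eins = (1-v^2\eins)\, b_{n+k-1}\eins + v^2\eins u\eins$; since $b_{n+k-1}\eins < u\eins$ and $v^2\eins > 0$, this convex combination lies strictly above $b_{n+k-1}\eins$, and then $b_{n+k}\eins = a_{n+k}\eins$ by \cref{l:5+}.

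For the MAP-coincidence claim, I would invoke the standing observation from \cref{s:aux} that $A$ is affine, whence $a_m = P_A b_{m-1}$ for every $m \geq 1$ regardless of $p_{m-1}$. Combined with the inductively obtained identity $b_m = P_B a_m$ for $m \geq n+1$, this shows that the iterates $a_{n+1}, b_{n+1}, a_{n+2}, b_{n+2}, \ldots$ are generated by alternately applying $P_A$ and $P_B$; they therefore coincide with the MAP sequence launched at $a_{n+1}$.

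Finally, the scalar recursion $b_{n+k}\eins = (1 - v^2\eins)\, b_{n+k-1}\eins + v^2\eins u\eins$ is a contraction toward $u\eins$ at rate $0 \leq 1 - v^2\eins < 1$, so $b_{n+k}\eins \to u\eins$; together with $b_{n+k}\zwei \equiv 1 = u\zwei$ this gives $b_{n+k} \to u$. Since \cref{f:Dyk} guarantees strong convergence of $(b_n)$ to $P_{A \cap B}(z)$, the identification $P_{A \cap B}(z) = u$ follows by uniqueness of the limit. I do not foresee a serious obstacle: \cref{l:5+} was crafted so that its conclusions feed back into its own hypotheses, and the only arithmetic input — the contraction formula — is immediate from \cref{l:1}.
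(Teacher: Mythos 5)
Your proposal is correct and follows essentially the same route as the paper: an induction powered by \cref{l:5+}, whose conclusions reproduce its own hypotheses at the next index, together with \cref{f:Dyk} to identify the limit as $P_{A\cap B}(z)=u$. The only cosmetic difference is in the last step: the paper identifies the limit by noting that $(b_m)$ eventually lies in the closed set $[-1,1]\times\{1\}$, so its Boyle--Dykstra limit must lie in $A\cap B\cap([-1,1]\times\{1\})=\{u\}$, whereas you compute $b_{n+k}\eins\to u\eins$ directly from the contraction $b_{n+k}\eins=(1-v^2\eins)b_{n+k-1}\eins+v^2\eins u\eins$; both arguments are valid.
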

\begin{proof}
This follows inductively from \cref{l:5+}. 
Notice that $\lim_{n\to\infty} b_n=u$,
because $(b_m)_m$ converges to $P_{A\cap B}z$ and 
$(b_m)_m$ lies eventually in $B\cap [-1,1]\times\{1\}$. 
So the limit lies in $A\cap B \cap [-1,1] \times\{1\} = \{u\}$.
\end{proof}

\section{Line and square: main results}

\label{s:main}

We are now ready to describe our main results for the line-square setting. 
There are essentially three scenarios, \emph{depending on the starting point $z$}, 
for Dykstra's algorithm:
(1) rapid finite convergence; 
(2) infinite convergence with steady progress; 
(3) initial stalling followed by infinite convergence with steady progress.
These three regions are depicted in \cref{fig:regions}, 
and we discuss them in the subsections below. 
\begin{figure}[h!]
  \begin{center}
  \includegraphics[scale=1.8]{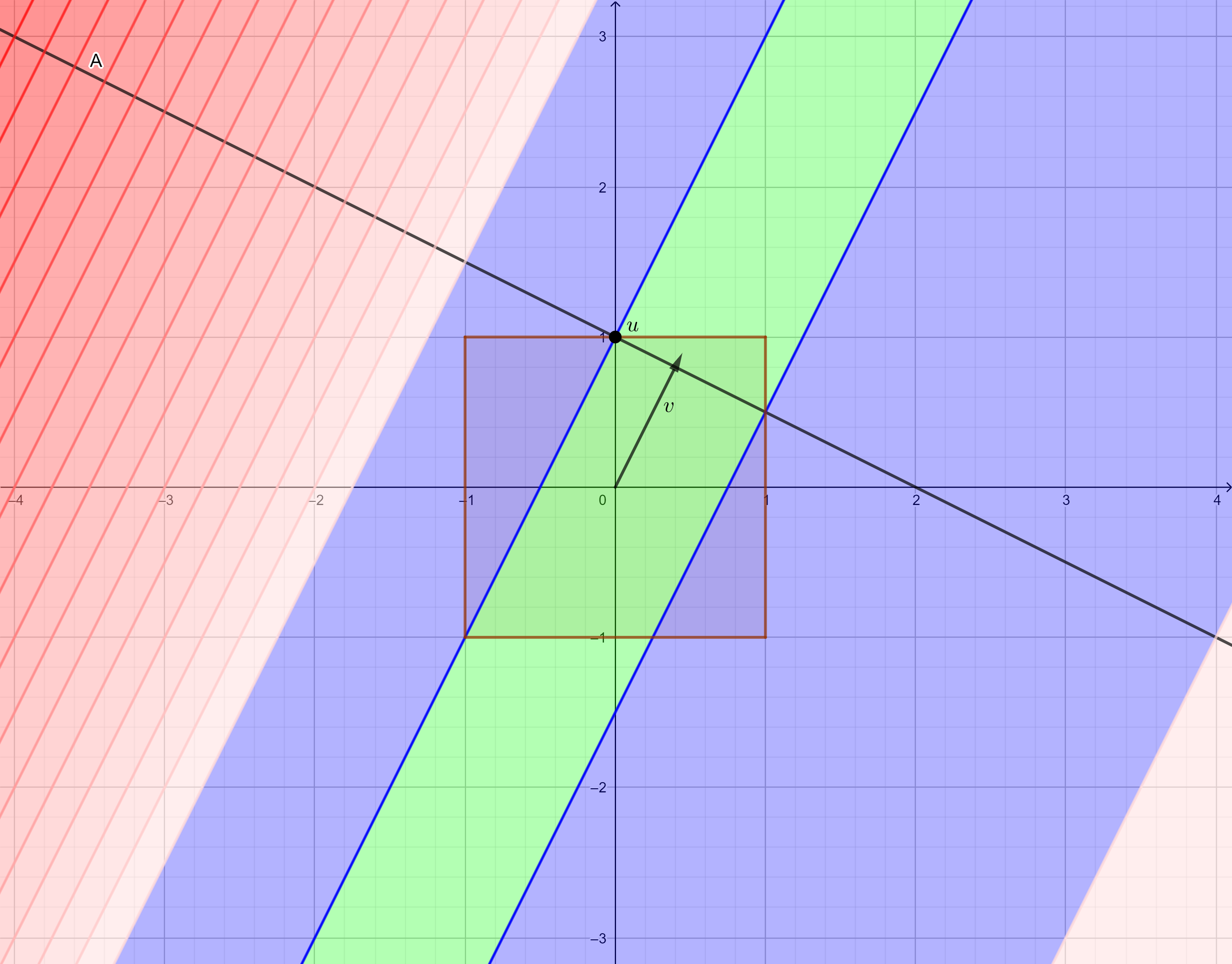}
  \caption{Three scenarios are possible, 
  depending on the location of the starting point $z$.
  If $z$ belongs to the green region 
  containing the origin, then Dykstra's algorithm converges rapidly 
  in finitely many steps. If $z$ belongs to one of the two 
  adjacent blue regions still intersecting the square $B$, 
  then Dykstra's algorithm does not
  converge finitely and it coincides with MAP. Finally, if $z$ is in the 
  remaining red region, the stalling occurs followed by infinite progress. 
  See the subsections in \cref{s:main} for details.}
  \label{fig:regions}
  \end{center}
\end{figure}
As we shall see, there is a close relationship to MAP.

\subsection*{When Dykstra's algorithm and MAP coincide, 
with rapid finite convergence}
We are done in two steps  provided that $P_A(z)\in B$: 

\begin{theorem} {\rm\bf (${P_Az\in B}$)}
Suppose that $u\eins\leq a_1\eins\leq 1$ and $|a_1\zwei|\leq 1$. 
Then the main sequences of Dykstra's algorithm coincides with 
the MAP sequence and convergence is finite and rapid:
$P_{A\cap B}(z) = a_1=b_1=a_2=b_2=\cdots$.
\end{theorem}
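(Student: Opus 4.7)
The plan is to observe that the hypothesis says precisely that $a_1 = P_A z$ already lies in $B$, whence it lies in $A\cap B$, and is therefore forced to coincide with $P_{A\cap B}(z)$. Everything else then falls out of \cref{c:0113a} and the product structure of $P_B$.

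First I would unpack the hypothesis. Since $a_1 = P_A z$ and we have assumed $u\eins \leq a_1\eins \leq 1$ together with $|a_1\zwei|\leq 1$, and since the standing assumption $-1 < u\eins$ gives $-1 < a_1\eins \leq 1$, the formula \eqref{e:projs} for $P_B$ shows that $a_1\in B$. Hence $a_1 \in A\cap B$, and because $A\cap B \subseteq A$ and $a_1 = P_A z$ is the nearest point of $A$ to $z$, it must also be the nearest point of $A\cap B$ to $z$; that is, $a_1 = P_{A\cap B}(z)$.

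Next I would verify the algorithmic identities. Since $q_0 = 0$ and $a_1\in B$, the product form of $P_B$ gives $b_1 = P_B(a_1+q_0) = P_B a_1 = a_1$. Applying \cref{c:0113a}\cref{c:0113a1} with $n=1$ then yields
\begin{equation}
a_1 = b_1 = a_2 = b_2 = \cdots = P_{A\cap B}(z),
\end{equation}
which is the asserted finite convergence of Dykstra's main sequences.

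Finally, for coincidence with MAP, I would compute directly: $c_1 = P_A(c_0) = P_A z = a_1$, then $c_2 = P_B c_1 = P_B a_1 = a_1 = b_1$, and since $a_1\in A$, also $c_3 = P_A a_1 = a_1 = a_2$, and so on by a one-line induction (alternatively, one can invoke \cref{l:0113b}, since $A$ is affine and the constant relation $b_n = P_B a_n$ holds trivially for $n\geq 2$). There is no real obstacle here; the only point requiring care is the verification that the first coordinate bound, combined with $-1 < u\eins$, really does place $a_1$ inside $B$, so that the product-form projection $P_B$ acts as the identity on $a_1$.
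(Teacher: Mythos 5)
Your proposal is correct and follows essentially the same route as the paper: both hinge on observing that the hypotheses (together with $-1<u\eins$) place $a_1=P_Az$ in $B$, so that $b_1=P_Ba_1=a_1$ and everything collapses. The only cosmetic difference is that you invoke \cref{c:0113a}\cref{c:0113a1} directly (plus a short induction for the MAP coincidence), whereas the paper notes $b_1=P_BP_Az=a_1\in A$ and cites \cref{t:finite}\cref{t:finite2}, which packages the same conclusion.
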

\begin{proof}
The hypothesis implies that $a_1=P_Az\in B$.
Hence $b_1 = P_BP_Az=P_Ba_1 = a_1 = P_Az\in A$ and the result
follows from \cref{t:finite}\cref{t:finite2}.
\end{proof}

We now turn to the case we omitted in the previous section 
--- the case when the line is parallel to a side of the box. 
It turns out that this also leads to 
finite convergence although two steps may be required. 

\begin{theorem} {\rm\bf (parallel case)}
Suppose that $A = \RR\times\{\alpha\}$, where $|\alpha|\leq 1$.
Then the main sequences of Dykstra's algorithm  and the MAP sequence
coincide; moreover, 
$P_{A\cap B}(z) = b_1=a_2=b_2=a_3=\cdots$.
\end{theorem}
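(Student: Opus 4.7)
The plan is to reduce the statement to \cref{t:finite}\cref{t:finite2}, whose hypothesis happens to be automatically satisfied in the parallel setting.

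First, I would note that $A=\RR\times\{\alpha\}$ is an affine subspace, so the second clause \cref{t:finite2} of \cref{t:finite} is the natural tool; we only need to verify that $P_BP_Az\in A$. Since $A$ is horizontal, the projection onto $A$ is explicit: for any $x=(x\eins,x\zwei)$,
\begin{equation}
P_Ax=(x\eins,\alpha),
\end{equation}
because $(A-A)^\perp=\{0\}\times\RR$ and $u=(0,\alpha)\in A$ can be taken as a base point. In particular $P_Az=(z\eins,\alpha)$. Next, by the coordinatewise formula \eqref{e:projs} for $P_B$ and the hypothesis $|\alpha|\leq 1$,
\begin{equation}
P_BP_Az=\bigl(P_{[-1,1]}z\eins,\,P_{[-1,1]}\alpha\bigr)=\bigl(P_{[-1,1]}z\eins,\,\alpha\bigr)\in A.
\end{equation}

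With this verified, \cref{t:finite}\cref{t:finite2} applies and delivers the full conclusion at once: the main sequences of Dykstra and the MAP sequence coincide, and
\begin{equation}
P_{A\cap B}(z)=P_BP_Az=b_1=a_2=b_2=a_3=\cdots,
\end{equation}
which is exactly what was claimed. There is essentially no obstacle here — the only point worth flagging is that, unlike in the previous theorem where $P_Az\in B$ forced $a_1=b_1$, in this parallel case we may have $a_1=P_Az\notin B$ (when $|z\eins|>1$), so a genuine projection onto $B$ is needed to reach $b_1$; this explains the ``two steps may be required'' wording preceding the statement.
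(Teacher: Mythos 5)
Your proof is correct and follows exactly the paper's route: the paper also just observes that the hypothesis forces $P_BP_Az\in A$ and then invokes \cref{t:finite}\cref{t:finite2}. Your explicit coordinate verification of $P_BP_Az=(P_{[-1,1]}z\eins,\alpha)\in A$ simply fills in the one-line claim the paper leaves implicit.
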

\begin{proof}
The hypothesis implies that $P_BP_Az\in A$.
Now apply \cref{t:finite}\cref{t:finite2}.
\end{proof}

\subsection*{When Dykstra's algorithm and MAP coincide with infinite convergence}

\begin{theorem} {\rm\bf (Dykstra $\equiv$ MAP)}
  \label{t:hard}
Suppose that $-1<a_1\eins<u\eins$ and $1 < a_1\zwei$.
Then Dykstra's algorithm and MAP produce the exactly same main sequences,
with infinite convergence. 
\end{theorem}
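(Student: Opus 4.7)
The plan is to reduce everything to the machinery already built up in \cref{s:aux}, in particular \cref{c:7} (which delivers the MAP-like behaviour from step $n{+}1$ onward) combined with \cref{l:0113b} (which upgrades this to full equality of the Dykstra and MAP sequences). The hypotheses are perfectly tailored so that after the very first Dykstra step we land in a configuration satisfying the assumptions of \cref{c:7} at $n=1$.

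First I would compute $a_1=P_Az$ and observe that, from the hypothesis, $a_1\eins\in(-1,u\eins)\subseteq(-1,1]$ while $a_1\zwei>1$. The projection formula \eqref{e:projs} for $P_B$ (coordinate-wise $P_{[-1,1]}$) then gives $b_1=(a_1\eins,1)$, and $q_1=a_1-b_1=(0,a_1\zwei-1)$ so in particular $q_1\eins=0$. At this point I can already verify all four hypotheses of \cref{c:7} at $n=1$: namely $-1<b_1\eins=a_1\eins<u\eins$, $b_1\zwei=1$, $a_1\eins<u\eins$, and $q_0\eins=0\leq0$. Applying \cref{c:7} then yields $b_{1+k}=P_Ba_{1+k}$ and $-1<b_k\eins<b_{1+k}\eins<u\eins$ with $b_k\zwei=1$, for every $k\geq1$. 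In particular, $b_m=P_Ba_m$ for every $m\geq 2$.

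Since $A$ is affine and $c_0=z=b_0$ by construction, the ``if and only if'' in \cref{l:0113b} immediately translates the conclusion $b_m=P_Ba_m$ for $m\geq 2$ into the desired full coincidence $c_{2n}=b_n$ and $c_{2n+1}=a_{n+1}$ for all $n\geq0$. Finally, for the claim of \emph{infinite} (as opposed to finite) convergence, I would point to the strict inequality $b_k\eins<b_{k+1}\eins<u\eins$ supplied by \cref{c:7}: the first coordinates form a strictly increasing sequence bounded strictly above by $u\eins$, whereas \cref{f:Dyk} forces the limit to be $P_{A\cap B}z=u$; hence no $b_n$ equals $u$ and the process cannot terminate in finitely many steps. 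There is no real obstacle here — the only item requiring any care is the coordinate bookkeeping that verifies the four hypotheses of \cref{c:7} at $n=1$; everything else is a direct citation.
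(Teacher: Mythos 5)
Your proof is correct and follows essentially the same route as the paper: verify that $b_1=(a_1\eins,1)$ with $-1<b_1\eins=a_1\eins<u\eins$ and $q_0\eins=0$, then invoke \cref{c:7} with $n=1$. The only difference is that you additionally route the conclusion through \cref{l:0113b} to justify the full coincidence of the two sequences and spell out why convergence cannot be finite (via the strictly increasing first coordinates converging to $u\eins$) --- both are careful elaborations of steps the paper leaves implicit in the ``in other words'' clause of \cref{c:7}, not a different argument.
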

\begin{proof}
The hypothesis implies that $-1<b_1\eins = a_1\eins<u\eins$ and
$b_1\zwei=1$. Recall also that $q_0=0$. 
The conclusion thus follows from \cref{c:7}, with $n=1$.
\end{proof}

\begin{remark}
We saw in \cref{t:hard} directly that MAP and Dykstra's algorithm do 
not converge in finitely many steps.
In fact, this is a universal phenomenon of MAP because
Luke, Teboulle, and Thao recently proved (see \cite[Theorem~7]{Luke}) that
\emph{in general} we have the dichotomy that
either $P_BP_A(z)\in A$ (and MAP terminates) or 
MAP does not converge in finitely many steps. 
\end{remark}

\subsection*{When Dykstra's algorithm stalls}

\begin{theorem}{\rm\bf (stalling)}
\label{t:stall}
Suppose that 
$a_1\eins\leq-1$ and $1< a_1\zwei$. 
Set 
\begin{equation}
n := 1+ \left\lfloor \frac{\displaystyle -1-a_1\kkk{1}}{\displaystyle(u\kkk{1}+1)v^2\kkk{1}} \right\rfloor.
\end{equation}
Then Dykstra algorithm stalls, i.e.,  $b_1 = b_2 = \cdots = b_n = (-1,1)$,
it then ``breaks free'' with $b_{n+1}\neq (-1,1)$, and 
it finally acts like MAP with starting point $b_{n+1}$.
\end{theorem}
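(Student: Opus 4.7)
The plan is to peel the argument into three phases that mirror the structure advertised in the statement: establish the initial collapse to the corner $(-1,1)$; run an induction driven by \cref{l:5} to extend the stall exactly $n$ steps; then hand the dynamics off to \cref{c:7}.

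First, I would verify the base case $b_1=(-1,1)$. Since $q_0=0$, we have $b_1=P_B a_1$, and the hypotheses $a_1\eins\leq -1$ and $a_1\zwei>1$ make $P_B a_1=(-1,1)$ immediate from the coordinate-wise formula \eqref{e:projs}. Set
\begin{equation*}
N := \left\lfloor \frac{-1-a_1\eins}{(u\eins+1)v^2\eins} \right\rfloor,
\end{equation*}
so the theorem's $n$ equals $N+1$; note $N\geq 0$ because $a_1\eins\leq -1$ and the denominator is positive (recall $v\in\RPP^2$ and $u\eins>-1$).

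Next, I would run induction on $k\in\{1,\dots,N+1\}$ to show $b_k=(-1,1)$. The base $k=1$ is done. For the inductive step, assume $b_1=\cdots=b_k=(-1,1)$ with $k\leq N$. Then the hypotheses of \cref{l:5} apply, and its equivalence \eqref{e:1911092am+} gives $b_{k+1}=(-1,1)$ since $k\leq N$. This completes the stalling claim $b_1=\cdots=b_n=(-1,1)$ with $n=N+1$.

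Applying \cref{l:5} one more time with $n=N+1$ and noting $N+1\not\leq N$, equivalence \eqref{e:1911092am+} forces $b_{n+1}\neq(-1,1)$, so the stall breaks. The ``moreover'' clause of \cref{l:5} then supplies
\begin{equation*}
-1<b_{n+1}\eins=a_{n+1}\eins<u\eins,\qquad b_{n+1}\zwei=1,\qquad q_{n+1}\eins=0,
\end{equation*}
and the inequality $a_{n+1}\eins<u\eins$ together with $q_n\in N_B((-1,1))=\RM\times\RP$ (so $q_n\eins\leq 0$). Finally, I would apply \cref{c:7} at index $n+1$: the four hypotheses $-1<b_{n+1}\eins<u\eins$, $b_{n+1}\zwei=1$, $a_{n+1}\eins<u\eins$, and $q_n\eins\leq 0$ are all now in hand, and the corollary delivers exactly the conclusion that the Dykstra main sequences from $a_{n+2}$ onward coincide with the MAP iterates launched from $a_{n+1}$, converging to $P_{A\cap B}(z)=u$.

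The main obstacle, as far as I can see, is purely bookkeeping: one must keep the floor-index arithmetic straight so that the off-by-one between the theorem's $n$ and the floor expression $N$ lines up with the strict/non-strict inequality $k\leq N$ that governs whether stalling persists. Everything analytic is already encapsulated in \cref{l:5} and \cref{c:7}, so no fresh estimates should be required.
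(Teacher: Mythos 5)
Your proposal is correct and follows essentially the same route as the paper, whose proof is simply ``Combine \cref{l:5} with \cref{c:7}'': you use the equivalence \eqref{e:1911092am+} to run the stall for exactly $n=N+1$ steps and then verify the four hypotheses of \cref{c:7} at index $n+1$. One small slip worth fixing: the ``moreover'' clause of \cref{l:5} gives $b_{n+1}\eins = a_{n+1}\eins + q_n\eins \leq a_{n+1}\eins$, not $b_{n+1}\eins = a_{n+1}\eins$ (indeed \cref{r:last}\ref{r:last2} stresses that $b_{n+1}\neq P_Ba_{n+1}$ in general); this does not damage your argument, since \cref{c:7} only needs the bounds $-1<b_{n+1}\eins<u\eins$, which still hold.
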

\begin{proof}
Combine \cref{l:5} with \cref{c:7}. 
\end{proof}

\begin{remark}
  \label{r:last}
  Some comments regarding \cref{t:stall} are in order.
\begin{enumerate}
  \item By choosing $a_1=z\in A$ with $z\eins$ very negative,
  we can arrange for $n$ to be as large as we want. 
  Thus the stalling phase for Dykstra's algorithm can be arbitrarily long!
  \item 
  \label{r:last2} 
  The point $b_{n+1}$ is not necessarily equal to $P_Ba_{n+1}=(a_{n+1}\eins,1)$;
  in fact, with the help of  \cref{l:1} and \cref{l:5}, one obtains
  \begin{equation}
   -1 <b_{n+1}\eins = a_1(\eins)+n(u\eins+1)v^2\eins\leq a_{n+1}\eins.
  \end{equation}
  Somewhat surprisingly, the orbits (in the sense of sets) of Dykstra's algorithm 
  and MAP need not be identical --- see \cref{fig:orbits} 
  for a visualization.
\end{enumerate}
\end{remark}

\begin{figure}[h!]
  \begin{center}
  \includegraphics[scale=3.0]{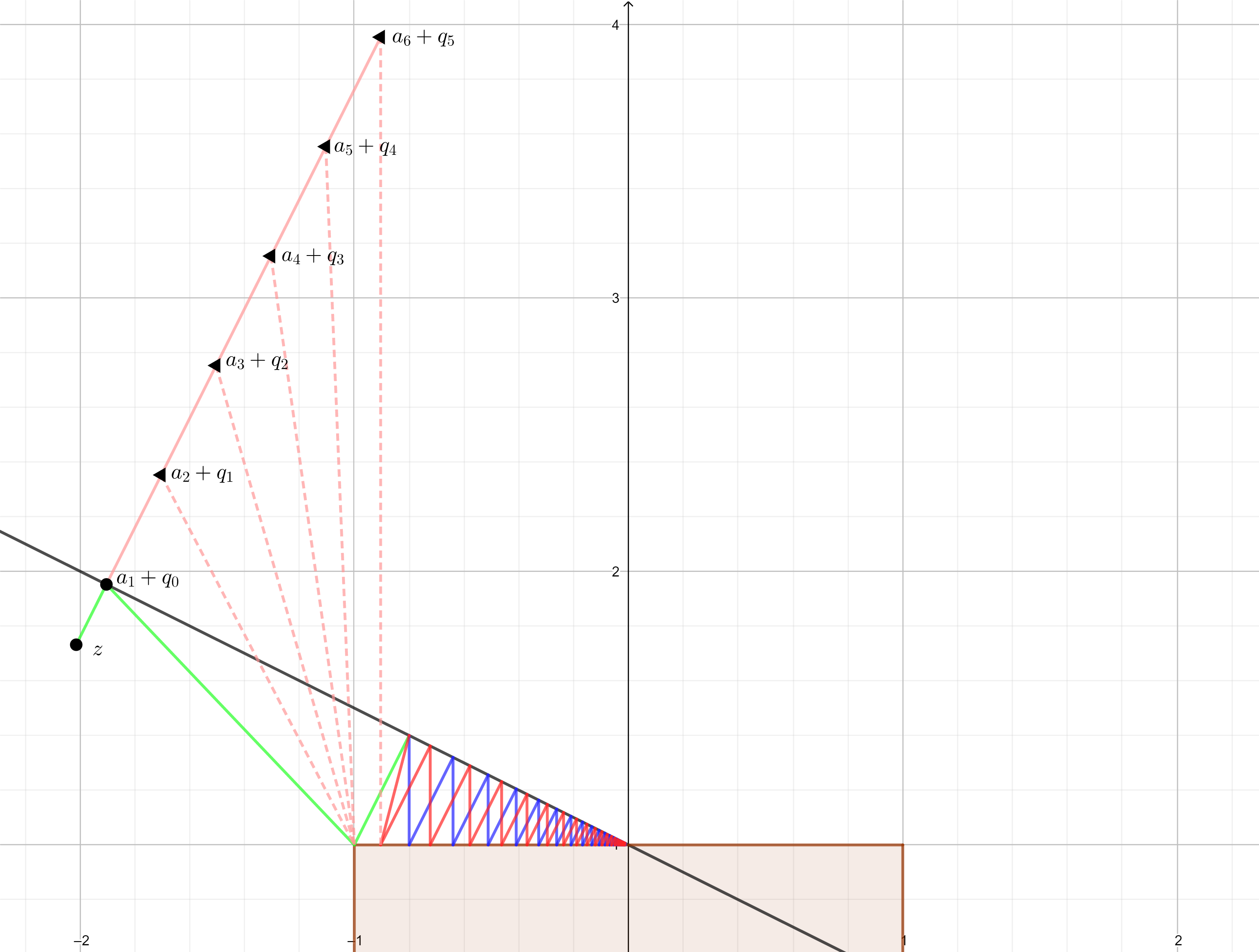}
  \caption{An illustration of \cref{r:last}\ref{r:last2} 
  where the starting point $z$ lies in the stalling region.
  Here $b_1 = b_2 = \cdots = b_5 = (-1,1)$ illustrates stalling;
  the orbit until this point is depicted in green. 
  (The stalling period can be made arbitrarily long by, for instance, 
   moving the starting point $z$ to the left.)
  Dykstra's algorithm then exits the stalling period; however, 
  $b_6$ is \emph{not} equal to $P_B(a_6)$! From this point 
  onwards, Dykstra's algorithm proceeds like MAP but starting from $b_6$, 
  with its orbit depicted in red. 
  In contrast, MAP proceeds along the green and then blue orbit, 
  without any stalling. 
  }
  \label{fig:orbits}
  \end{center}
\end{figure}

\section{Conclusion}

\label{s:conc}

The following example underlines 
the importance of the \emph{order} of the sets --- 
projecting first onto the square and then onto the line will not work!

\begin{example} {\rm\bf (order matters!)}
Suppose 
that $A$ is the line through the points $(0,1)$ and $(1,0)$, and that 
$B = [-1,1]\times[-1,1]$ is the square in $\RR^2$. 
Consider $z=(-2,-1)$.
Then $P_Bz = (-1,-1)$ and thus $P_AP_Bz = (\thalb,\thalb)\in B$
while $P_{A\cap B}z = (0,1)$.
Hence MAP stops right away with the limit being
different from $P_{A\cap B}(z)$, the limit 
of the main sequences of Dykstra's algorithm. 
\end{example}

The following questions appear to be of interest and 
are left for future investigations. 
\begin{itemize}
  \item Can we identify more cases when it suffices to apply
   MAP to find $P_{A\cap B}(z)$?
  \item 
  \marginpar{Reworded} 
  Can one prove a higher-dimensional version of 
  the box-line scenario considered in the second half of this paper? 
  In fact, \cite{BBK} suggests that 
  \eqref{e:0113b} holds numerically and thus 
  that extensions may be possible. 
  \item If MAP and Dykstra's algorithm yield the same limit, is it true
  that the convergence of MAP is never slower than Dykstra?
  All results in this paper --- as well as those in \cite{Jat} --- suggest 
  that this is true for some classes of problems.
\end{itemize}

\section*{Acknowledgments}
HHB was partially supported by the Natural Sciences and
Engineering Research Council of Canada.

\end{document}